\numberwithin{equation}{section}
\theoremstyle{plain}
\DeclareMathAlphabet{\pazocal}{OMS}{zplm}{m}{n}
\newtheorem{theorem}{Theorem} [section]
\newtheorem{lemma}{Lemma}[section]
\newtheorem{proposition}{Proposition}[section]
\newtheorem{cor}{Corollary}[section]
\theoremstyle{remark}
\newtheorem{remark}{Remark}[section]
\newcommand{\dist}{\textrm{dist}}
\begin{document}

\title[ CSC Ricci shrinker]{ Rigidity of four-dimensional Gradient shrinking Ricci solitons}


 \subjclass[2010]{53C25; 53C20; 53E20}

\thanks{The authors are  partially supported by CNPq and Faperj of Brazil.}

\address{Departamento de Matem\'atica Aplicada, Instituto de Matem\'atica e Estat\'\i stica, Universidade Federal Fluminense, S\~ao Domingos,
Niter\'oi, RJ 24210-201, Brazil}
\author[Xu Cheng]{Xu Cheng}

\email{xucheng@id.uff.br}
\author[Detang Zhou]{Detang Zhou}
\address{Departamento de Geometria, Instituto de Matem\'atica e Estat\'\i stica, Universidade Federal Fluminense, S\~ao Domingos,
Niter\'oi, RJ 24210-201, Brazil}
\email{zhoud@id.uff.br}

\newcommand{\M}{\mathcal M}

\begin{abstract} Let $(M, g, f)$ be a $4$-dimensional complete noncompact gradient shrinking  Ricci soliton with the equation  $Ric+\nabla^2f=\lambda g$, where $\lambda$ is a positive real number. We prove that if $M$ has constant scalar curvature $S=2\lambda$, it  must be  a quotient of $\mathbb{S}^2\times \mathbb{R}^2$. Together with the known results, this implies that  a $4$-dimensional complete gradient shrinking  Ricci soliton has constant scalar curvature if and only if it is rigid, that is,  it is  either Einstein,  or a finite quotient of Gaussian shrinking soliton  $\Bbb{R}^4$, $\Bbb{S}^{2}\times\Bbb{R}^{2}$ or $\Bbb{S}^{3}\times\Bbb{R}$.
\end{abstract}

\maketitle
\section{Introduction}

A complete $n$-dimensional  Riemannian manifold $(M, g)$ is said to be a {\it gradient  shrinking Ricci soliton} if there exists
a smooth function $f$ on $M$ such that the equation
\begin{align}\label{soliton}
\text{Ric}+\nabla^2f=\lambda g
\end{align}
holds for some positive  constant $\lambda\in \mathbb{R}$, where $\text{Ric}$ is the Ricci tensor and  $\nabla^2f$ is the Hessian of the function $f$.\\

The function $f$ is called a
{\it potential function} of the gradient shrinking Ricci soliton. Similarly, a gradient Ricci soliton \eqref{soliton}  is called
 {\it  steady} and {\it expanding} if the real number $\lambda$ is $0$ and negative respectively.\\


Gradient Ricci solitons  are important in understanding the Hamilton's Ricci 
flow \cite{Ha}  (see \cite{MR2274812} also). They are self-similar solutions of the Ricci flows and arise often as singularity models of the Ricci 
flow. Indeed, after the work of  Perelman \cite{P1} and others, it has been confirmed by Enders,  M\"uller and Topping \cite{Topping} that, under certain mild restriction, the blow-ups around a Type I singularity point of a Ricci flow converge to (nontrivial) gradient shrinking Ricci solitons.     Therefore, it is  natural to seek classification for gradient  Ricci solitons. We refer the readers to  \cite{caoALM11} and references therein for a overview on the subject.  \\

 Shrinking gradient Ricci solitons  are well understood in dimensions $2$ and $3$. It was proved by Hamilton \cite{Ha}  that a two-dimensional complete gradient shrinking Ricci soliton is either isometric to the plane $\Bbb{R}^2$ or a quotient of the sphere $\Bbb{S}^2.$  For dimension $3$, from the works of  Ivey \cite{MR1249376},
Perelman  \cite{P1}, Naber \cite{Naber},  Ni and Wallach \cite{Ni}, and  Cao, Chen and Zhu \cite{CCZ}, it is known that any complete three-dimensional gradient shrinking Ricci soliton is a finite quotient of either the round sphere $\Bbb{S}^3,$ or the Gaussian shrinking soliton $\Bbb{R}^3,$ or the round cylinder $\Bbb{S}^{2}\times\Bbb{R}.$ In higher dimensions, in recent years, there has been  much progress. See, e.g., \cite{MR2448435, Ni, zhang, Naber, MR2732975, PW, CWZ, FLGR, KW, MS, MW2} and the references therein. \\

  In \cite{MR2507581}, Petersen
and Wylie  defined a gradient Ricci soliton $(M, g)$ to be {\it rigid} if it is isometric to a quotient  $N\times_{\Gamma}\mathbb{R}^k$, where $N$ is an Einstein manifold, $\Gamma$ acts freely on $N$ and by orthogonal
transformations on $\mathbb{R}^k$ (no translational components) to get a flat vector bundle over a base that is Einstein and with $f=\frac{\lambda }{2}d^2$, where $d$ is the distance in the flat fibers to the base.
In shrinking case, since  a gradient shrinking Ricci soliton has the finite fundamental group (\cite{Morgan}),   a gradient shrinking Ricci soliton is rigid if it is  isometric to a finite quotient  $N\times_{\Gamma}\mathbb{R}^k$.\\


Different from dimensions $2$ and $3$,  non-rigid compact shrinking (K$\ddot{\text{a}}$hler) gradient solitons were  constructed in dimension 4   by Koiso \cite{Ko} and Cao \cite{Cao1}.  In
any dimension, Eminenti,  La Nave and Mantegazza \cite{MR2448435} proved that a compact shrinking soliton is rigid if and only if  its scalar
curvature is constant.\\

Naber \cite{Naber} showed that any $4$-dimensional non-flat complete noncompact gradient shrinking Ricci soliton
with bounded non-negative curvature operator is a quotient of either $\mathbb{S}^3\times \mathbb{R}$ or $\mathbb{S}^2\times \mathbb{R}^2$. Petersen
and Wylie  (\cite[Theorem 1.2]{MR2507581}) proved that a complete gradient Ricci soliton  is rigid if and only if it has
constant scalar curvature and is radially flat, that is, the sectional curvature $K(\cdot, \nabla f)=0$. They also showed that  the scalar curvature $S$ of a gradient Ricci soliton  is $0$ or $n\lambda$,
 if and only if the underlying Riemannian structure is
Einstein \cite[Proposition 3.3]{MR2507581}.  \\

 Recently, Fern\'{a}ndez-L\'{o}pez and  Garc\'\i a-R\'\i o  \cite{FR} proved the following results for complete $n$-dimensional gradient Ricci solitons \eqref{soliton} with constant scalar curvature $S$:  (i) The possible value of $S$ is  $\{0, \lambda, \cdots, (n-1)\lambda, n\lambda\}$  \cite[Theorem 1]{FR}. (ii) If $S$ takes the value $(n-1)\lambda$,  then the soliton must be rigid. In the shrinking case,  there is  no any complete gradient shrinking Ricci soliton   with $S=\lambda$ \cite[Theorem 10]{FR}.  (iii) Any four-dimensional gradient
shrinking  Ricci soliton with constant scalar curvature $S = 2\lambda$ has
non-negative  Ricci curvature \cite[Theorem 5]{FR}.\\

During the IX Workshop on Differential Geometry (2019) in Maceió, Brazil,  Huai-Dong Cao raised the following 

\noindent {\bf Conjecture}: 
Let $(M^n, g, f)$, $n\geq 4$,  be a complete $n$-dimensional gradient shrinking Ricci soliton. If $(M, g)$ has constant scalar curvature, then it must be rigid, 
i.e., a finite quotient of $\mathbb{N}^k\times \mathbb{R}^{n-k}$ for some Einstein manifold $\mathbb{N}$ of positive scalar curvature." \\

Now we focus on   $4$-dimensional complete shrinking gradient Ricci solitons with constant scalar curvature. By the results in \cite{FR} and \cite{MR2507581} stated above, 
 their  scalar curvature  $S\in \{0,  2\lambda, 3\lambda, 4\lambda\}$.  Moreover, if $S=0$ or $ 4\lambda$, they are   Einstein;   If $S=3\lambda$,  they are  finite quotient of $\Bbb{S}^{3}\times\Bbb{R}$. Therefore, $S=2\lambda$ is the unique unknown case.\\
 
 In this paper we solve this case and prove the following result.
 
\begin{theorem}\label{thm-rig} 
 Let $(M, g, f)$
be a $4$-dimensional  complete noncompact gradient shrinking Ricci soliton satisfying \eqref{soliton}. If $M$  has the constant scalar curvature $S=2\lambda$, then 
it must be isometric to a finite quotient of $\mathbb{S}^2\times \mathbb{R}^2$. 
\end{theorem}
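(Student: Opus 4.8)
The plan is to reduce the statement, via the Petersen--Wylie rigidity criterion, to proving that $(M,g)$ is radially flat, and then to identify the resulting rigid soliton. First I would record the pointwise consequences of $S\equiv 2\lambda$. Tracing \eqref{soliton} gives $\Delta f=4\lambda-S=2\lambda$; a standard soliton identity gives $\tfrac12\nabla S=\Rc(\nabla f)$, so $\Rc(\nabla f)=0$ and $\nabla f$ spans a null direction of the Ricci tensor. Writing $\Delta_f=\Delta-\nabla_{\nabla f}$ for the drift Laplacian, the soliton identity $\Delta_f S=2\lambda S-2|\Rc|^2$ together with $\Delta_f S=0$ yields $|\Rc|^2=\lambda S=2\lambda^2$, whence also $|\Hess f|^2=|\lambda g-\Rc|^2=2\lambda^2$. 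By \cite[Theorem 5]{FR} the soliton has $\Rc\ge 0$, so at each point the eigenvalues of $\Rc$ are $0,\mu_2,\mu_3,\mu_4\ge 0$ with $\sum_i\mu_i=2\lambda$ and $\sum_i\mu_i^2=2\lambda^2$, the $0$-eigenspace containing $\nabla f$. Since $S$ is constant, \cite[Theorem 1.2]{MR2507581} reduces the theorem to establishing radial flatness $K(\cdot,\nabla f)=0$: the soliton is then rigid, and a noncompact rigid soliton with $S=2\lambda$ must have a two-dimensional Einstein factor with $\Rc=\lambda g$, i.e. a round $\mathbb{S}^2$, forcing a finite quotient of $\mathbb{S}^2\times\mathbb{R}^2$.

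The analytic heart would rest on two identities. Differentiating $\Rc(\nabla f)=0$ and using $\Hess f=\lambda g-\Rc$ gives the radial curvature formula
\[ R(X,\nabla f)\nabla f=\lambda\,\Rc(X)-\Rc^2(X)+(\nabla_{\nabla f}\Rc)(X). \]
Separately, the soliton evolution equation $\Delta_f R_{ij}=2\lambda R_{ij}-2R_{ikjl}R_{kl}$, contracted with $\Rc$ and combined with $\Delta_f|\Rc|^2=0$, yields the pointwise identity
\[ |\nabla\Rc|^2=2R_{ikjl}R_{ij}R_{kl}-4\lambda^3. \]
Diagonalizing $\Rc$ in an orthonormal frame $e_1=\nabla f/|\nabla f|,e_2,e_3,e_4$ and using $\mu_1=0$, the cubic term collapses to $2\sum_{2\le i<j\le 4}\mu_i\mu_j K_{ij}$, so only the sectional curvatures of planes spanned by non-radial eigendirections survive.

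I expect the main obstacle to be controlling this cubic curvature term, which in dimension four involves the Weyl tensor and is \emph{not} determined by $\Rc$ alone. The goal is to show the right-hand side is nonpositive, hence $\nabla\Rc\equiv 0$. Since $\operatorname{tr}(\Rc^3)=2\lambda^3+3\mu_2\mu_3\mu_4$, forcing the eigenvalues to be $\{0,0,\lambda,\lambda\}$ is equivalent to $\mu_2\mu_3\mu_4=0$, i.e. to $\Rc$ having rank at most two; it is precisely the positivity $\mu_2\mu_3\mu_4>0$ that must be excluded. To do so I would combine the four-dimensional curvature decomposition and the constancy of $|\Rc-\tfrac{\lambda}{2}g|^2=\lambda^2$ with the radial formula above; failing a purely pointwise argument, I would integrate the displayed identity against the weighted measure $e^{-f}\,dV$. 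The boundary terms vanish because $e^{-f}$ decays like $e^{-\lambda r^2/2}$ while the curvature of a shrinker grows at most polynomially and the relevant tensors lie in $L^2(e^{-f})$, so the integral identity forces $\nabla\Rc\equiv 0$.

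Once $\nabla\Rc\equiv 0$, the eigenvalues are the constants $\{0,0,\lambda,\lambda\}$ and their eigendistributions are parallel; in particular $\ker\Rc\supset\operatorname{span}(\nabla f)$ is a parallel, flat, totally geodesic two-plane field. The de Rham theorem then splits the universal cover isometrically as $N^2\times\mathbb{R}^2$, where $N^2$ is Einstein with $\Rc_N=\lambda g_N$, hence a round sphere $\mathbb{S}^2$, and $f=\tfrac{\lambda}{2}|x|^2$ on the flat factor. Since a gradient shrinking soliton has finite fundamental group \cite{Morgan}, $M$ is a finite quotient of $\mathbb{S}^2\times\mathbb{R}^2$. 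Alternatively, parallel $\Rc\ge 0$ together with the boundedness of the curvature makes Naber's classification \cite{Naber} applicable, and $S=2\lambda$ rules out the cylinder $\mathbb{S}^3\times\mathbb{R}$, leaving exactly $\mathbb{S}^2\times\mathbb{R}^2$.
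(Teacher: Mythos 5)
Your reductions and pointwise identities are correct as far as they go: $\Rc(\nabla f)=0$, $|\Rc|^2=\lambda S$, the radial curvature formula (which is exactly \eqref{eq3} specialized to constant $S$), and the identity $|\nabla\Rc|^2=2R_{ikjl}R_{ij}R_{kl}-4\lambda^3$ (the paper's \eqref{ric-square-1}); and your endgame (parallel Ricci, de Rham splitting, or Naber's classification) would indeed finish the proof \emph{if} you had $\nabla\Rc\equiv 0$. But there is a genuine gap exactly at what you call the main obstacle: you never produce a mechanism to control the cubic term $2\sum_{2\le i<j\le 4}\mu_i\mu_j K_{ij}$. Integrating your displayed identity against $e^{-f}\,dV$ is vacuous --- it is a pointwise identity, so its weighted integral carries no new information, and you exhibit neither a sign condition (the $K_{ij}$ involve the Weyl tensor, which is not determined by $\Rc$) nor any divergence structure that integration by parts could exploit. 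In fact the identity says $R_{ikjl}R_{ij}R_{kl}\ge 2\lambda^3$ pointwise, and what you need is the reverse inequality; asserting that ``the integral identity forces $\nabla\Rc\equiv 0$'' is a non sequitur. (A smaller issue: your decay claim for the boundary terms invokes polynomial curvature growth of general shrinkers, whereas the actual control here comes from the Munteanu--Wang estimate $|Rm|\le cS$, valid for four-dimensional shrinkers with bounded scalar curvature, together with the Cao--Zhou volume bound.)

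The paper closes this gap with two ideas absent from your proposal. First, since $\Rc(\nabla f)=0$ and $S$ is constant, $f$ is isoparametric and its regular level sets $\Sigma(t)$ are \emph{three}-dimensional, so their intrinsic curvature tensor is completely determined by their Ricci tensor (no Weyl tensor in dimension three); the Gauss equations, with second fundamental form $h_{\alpha\beta}=\big(\tfrac12\delta_{\alpha\beta}-R_{\alpha\beta}\big)/\sqrt f$, then express the non-radial sectional curvatures $K_{\alpha\beta}$ through the Ricci eigenvalues, $f$, and the radial curvatures $R_{\alpha 4\alpha 4}$; the latter are handled by your radial formula, and the resulting first-order terms collapse, via $\nabla_4|\Rc|^2=0$, into $\nabla_4\,\mathrm{tr}(\Rc^3)$, yielding \eqref{eq-product2}. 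This is precisely how the Weyl obstruction is circumvented. Second, the Liouville argument is run not on $\nabla\Rc$ but on the nonnegative function $h=f\big(\mathrm{tr}(\Rc^3)-\tfrac14\big)=3f\,\mu_2\mu_3\mu_4$: combining the above with the drifted Bochner computation $\Delta_f\,\mathrm{tr}(\Rc^3)\ge 4\,\mathrm{tr}(\Rc^3)-1-\tfrac32|\nabla\Rc|^2$ (Proposition \ref{tr}, a computation your outline never performs) gives $\Delta_f h\ge 9h$ (Lemma \ref{lemma2}); with $h,|\nabla h|\in L^2(e^{-f})$ a cutoff integration forces $h\equiv 0$, hence $\mathrm{rank}(\Rc)=2$, and rigidity follows from \cite[Theorem 2]{FR} --- without ever proving radial flatness or $\nabla\Rc=0$ first. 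Without a substitute for these two steps, your outline does not constitute a proof.
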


In dimension $4$,  a  complete shrinking gradient Ricci soliton is   rigid if it is  isometric to either Einstein, or a finite quotient of Gaussian shrinker $\Bbb{R}^4$, $\Bbb{S}^{2}\times\Bbb{R}^{2}$ or $\Bbb{S}^{3}\times\Bbb{R}.$ Therefore,
Theorem \ref{thm-rig} together with \cite[Proposition 3.3]{MR2507581} and  \cite[Theorem 5]{FR}  confirms Cao's conjecture in dimension $n=4$.

\begin{theorem}\label{thm-rig-1} 
 Let $(M, g, f)$
be a $4$-dimensional  complete noncompact gradient shrinking Ricci soliton. Then  $M$ has the constant scalar curvature  if and only if it is rigid, that is,  it is  isometric to either Einstein, or a finite quotient of Gaussian shrinking soliton $\Bbb{R}^4$, $\Bbb{S}^{2}\times\Bbb{R}^{2}$ or $\Bbb{S}^{3}\times\Bbb{R}$.
\end{theorem}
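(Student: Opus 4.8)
The plan is to deduce Theorem \ref{thm-rig-1} from Theorem \ref{thm-rig} together with the classification of admissible scalar curvatures, the only previously open value being $S=2\lambda$. The direction ``rigid $\Rightarrow$ constant $S$'' is immediate: an Einstein manifold has $S=n\lambda$, the Gaussian soliton $\mathbb{R}^4$ has $S=0$, and a product $N^k\times\mathbb{R}^{4-k}$ with $N$ Einstein of Einstein constant $\lambda$ has $S=k\lambda$; in every case the scalar curvature is constant. So I would spend the effort on the converse.

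Assume $S$ is constant. By \cite[Theorem 1]{FR} we have $S\in\{0,\lambda,2\lambda,3\lambda,4\lambda\}$, and I would treat each value in turn. The value $S=\lambda$ is excluded by \cite[Theorem 10]{FR}. If $S=0$ or $S=4\lambda$ then \cite[Proposition 3.3]{MR2507581} forces $(M,g)$ to be Einstein; for noncompact $M$ the case $S=4\lambda$ (i.e.\ $\mathrm{Ric}=\lambda g>0$) is vacuous by Bonnet--Myers, while $S=0$ is the Ricci-flat Gaussian shrinker $\mathbb{R}^4$. The value $S=3\lambda=(n-1)\lambda$ makes the soliton rigid by \cite{FR}, and rigidity with $S=3\lambda$ splits off a line over a $3$-dimensional Einstein, hence constant-curvature, factor, giving a finite quotient of $\mathbb{S}^3\times\mathbb{R}$. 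The one remaining value, $S=2\lambda$, is exactly Theorem \ref{thm-rig} and yields a finite quotient of $\mathbb{S}^2\times\mathbb{R}^2$. Assembling the cases proves the theorem.

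Thus the entire content, and the main obstacle, lies in Theorem \ref{thm-rig}, which I would prove as follows. Tracing \eqref{soliton} gives $\Delta f=4\lambda-S=2\lambda$, and the contracted second Bianchi identity gives $\tfrac12\nabla S=\mathrm{Ric}(\nabla f)$, so constancy of $S$ forces $\mathrm{Ric}(\nabla f)=0$; feeding $S$ constant into the weighted identity $\Delta_f S=2\lambda S-2|\mathrm{Ric}|^2$ yields $|\mathrm{Ric}|^2=2\lambda^2$, and by \cite[Theorem 5]{FR} we also have $\mathrm{Ric}\ge 0$. The goal is to upgrade these to $\nabla\mathrm{Ric}=0$. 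For this I would integrate the soliton evolution identity $\Delta_f R_{ij}=2\lambda R_{ij}-2R_{ikjl}R^{kl}$ against $\mathrm{Ric}$ with the weight $e^{-f}$; since $f$ grows quadratically the weighted integrations by parts converge, and self-adjointness of $\Delta_f$ gives $\int|\nabla\mathrm{Ric}|^2 e^{-f}=2\int\bigl(R_{ikjl}R^{kl}R^{ij}-\lambda|\mathrm{Ric}|^2\bigr)e^{-f}$. The crux is then the matching \emph{upper} bound $R_{ikjl}R^{kl}R^{ij}\le\lambda|\mathrm{Ric}|^2$, which holds with equality on $\mathbb{S}^2\times\mathbb{R}^2$; establishing it forces $\nabla\mathrm{Ric}=0$. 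I expect this to be the hard part: the cubic curvature term is not controlled by the Ricci eigenvalues alone---indeed the constraints $\sum_i\mu_i=2\lambda$, $\sum_i\mu_i^2=2\lambda^2$, $\mu_i\ge 0$ admit the spurious pattern $(\tfrac{4\lambda}{3},\tfrac{\lambda}{3},\tfrac{\lambda}{3},0)$---so one must bring in the Weyl tensor and exploit the special algebra of curvature in dimension $4$ to close the estimate.

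Once $\nabla\mathrm{Ric}=0$, the identity $R(X,Y)\nabla f=(\nabla_Y\mathrm{Ric})(X)-(\nabla_X\mathrm{Ric})(Y)=0$ shows that the soliton is radially flat, so \cite[Theorem 1.2]{MR2507581} makes it rigid; equivalently, parallel Ricci gives a local de Rham splitting into Einstein factors. The eigenvalues of $\mathrm{Ric}$ are now constant, nonnegative, sum to $2\lambda$, and have squares summing to $2\lambda^2$, which forces the positive factor to be $2$-dimensional (a $3$-dimensional Einstein factor would give $|\mathrm{Ric}|^2=\tfrac{4}{3}\lambda^2\ne 2\lambda^2$, and $\mathrm{Ric}(\nabla f)=0$ with $\nabla f\not\equiv 0$ rules out the purely Einstein case). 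Hence $M$ splits as a round $\mathbb{S}^2$ of scalar curvature $2\lambda$ times a flat $\mathbb{R}^2$, with $f=\tfrac{\lambda}{2}|x|^2$ on the flat factor. Since a shrinking soliton has finite fundamental group \cite{Morgan}, $M$ is a finite quotient of $\mathbb{S}^2\times\mathbb{R}^2$, completing the chain of implications and hence Theorem \ref{thm-rig-1}.
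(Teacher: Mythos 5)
Your reduction is sound where it is easy: the case analysis $S\in\{0,\lambda,2\lambda,3\lambda,4\lambda\}$ via \cite{FR} and \cite{MR2507581}, the forward direction, and the final conditional paragraph (parallel Ricci $\Rightarrow$ radial flatness via \eqref{eq1} $\Rightarrow$ rigid by \cite[Theorem 1.2]{MR2507581}) all match the paper's reduction of Theorem \ref{thm-rig-1} to Theorem \ref{thm-rig}. But your proof of Theorem \ref{thm-rig} has a genuine gap at exactly the step you flag, and the gap is worse than ``hard'': the upper bound $R_{ikjl}R_{kl}R_{ij}\le\lambda|\text{Ric}|^2$ you need is, given constant $S$, \emph{pointwise equivalent} to the conclusion $\nabla\text{Ric}\equiv 0$. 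Indeed, applying \eqref{eq6} to $|\text{Ric}|^2$ gives $\Delta_f|\text{Ric}|^2=2|\text{Ric}|^2+2|\nabla\text{Ric}|^2-4R_{ij}R_{ikjl}R_{kl}$, and since $|\text{Ric}|^2=2\lambda^2$ is constant the left side vanishes, yielding the pointwise identity
\begin{equation*}
R_{ikjl}R_{kl}R_{ij}=\lambda|\text{Ric}|^2+\tfrac12|\nabla\text{Ric}|^2
\end{equation*}
(this is \eqref{ric-square-1} in the normalization $\lambda=\tfrac12$, $S=1$). Consequently your weighted identity $\int_M|\nabla\text{Ric}|^2e^{-f}=2\int_M\bigl(R_{ikjl}R_{kl}R_{ij}-\lambda|\text{Ric}|^2\bigr)e^{-f}$ is the tautology $\int_M|\nabla\text{Ric}|^2e^{-f}=\int_M|\nabla\text{Ric}|^2e^{-f}$: pairing $\Delta_f\text{Ric}$ against $\text{Ric}$ with weight $e^{-f}$ recovers only $\int_M\Delta_f|\text{Ric}|^2\,e^{-f}=0$, which is vacuous because $|\text{Ric}|^2$ is constant. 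No Weyl-tensor algebra at this quadratic level can close the loop, since the estimate you hope to prove already encodes the theorem; the spurious eigenvalue pattern $(\tfrac{4\lambda}{3},\tfrac{\lambda}{3},\tfrac{\lambda}{3},0)$ you correctly identify is precisely the obstruction.

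The paper escapes this circularity by working one degree higher. It computes $\Delta_f\textrm{tr}(\text{Ric}^3)$ (Proposition \ref{tr}) and --- this is the key idea absent from your proposal --- exploits the isoparametric structure of $f$ (Theorem \ref{thm-cscs}): on the three-dimensional level sets $\Sigma(t)$ the full curvature tensor is determined by the Ricci tensor, so the Gauss equations together with \eqref{eq3} convert the mixed components $R_{\alpha 4\alpha 4}$ into radial derivatives of $\textrm{tr}(\text{Ric}^3)$, producing the differential inequality $\Delta_f\bigl[f\bigl(\textrm{tr}(\text{Ric}^3)-\tfrac14\bigr)\bigr]\ge 9f\bigl(\textrm{tr}(\text{Ric}^3)-\tfrac14\bigr)$ of Lemma \ref{lemma2} for the \emph{nonnegative} quantity $f\bigl(\textrm{tr}(\text{Ric}^3)-\tfrac14\bigr)=3fa_1a_2a_3$. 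A cutoff integration --- justified by the Munteanu--Wang bound $|Rm|\le cS$ of \cite{MW15} and the volume estimates of \cite{MR2732975}, the same ingredients your integrations by parts would require --- then forces $\textrm{tr}(\text{Ric}^3)\equiv\tfrac14$, hence Ricci eigenvalues $(\tfrac12,\tfrac12,0,0)$, Ricci rank $2$, and rigidity by \cite[Theorem 2]{FR}, without ever proving $\nabla\text{Ric}=0$ directly. In short: your architecture is right, but the bridge you leave unbuilt cannot be built by your method, and the cubic-trace/isoparametric argument is the missing substitute.
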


In order to prove Theorem \ref{thm-rig}, we calculate the drifted Laplacian $\Delta_f\textrm{tr}(\text{Ric}^3)$ of the trace of the tensor $\textrm{Ric}^3$ for $4$-dimensional  gradient shrinking Ricci soliton with constant scalar curvature $2\lambda$ and then prove Inequality \eqref{tr-ine}, which is a drifted Laplacian  inequality acting on the nonnegative function $f(\textrm{tr}(\text{Ric}^3)-\frac14)$ with constant coefficients. Using this inequality together with some geometric properties of $4$-dimensional  shrinking solitons such as the curvature estimates of Munteanu and Wang \cite{MW15}  and volume estimates of Cao and the second author \cite{MR2732975},  we consequently get a way to prove Theorem  \ref{thm-rig}. \\

In the proof of  Inequality \eqref{tr-ine},  the isoparametric property of the potential function $f$ plays a very important role. Based on this property, the curvature tensor    of $M$ is related with  the curvature tensors of the  level sets of $f$ by  Gauss equations. One of the crucial facts is that the level sets of $f$ is three-dimensional, and its curvature tensor  can be expressed by its Ricci tensor. It is also worth mentioning that in the calculation of the drifted Laplacian of the trace of the tensor $\textrm{Ric}^3$, the trace of the tensor $\textrm{Ric}^4$ appears. However  it can be expressed  by the trace of the tensor $\textrm{Ric}^3$ using the algebraic formula.\\


In general, the classification of  gradient shrinking Ricci solitons  is more subtle in dimensions more than three, mainly due to the non-triviality of
the Weyl tensor W.  There are some results on the  four-dimensional case under some vanishing conditions involving the Weyl tensor $W.$  By the works of \cite{MR2448435, Ni, zhang, PW, CWZ, MS}, it is known that complete locally conformally flat (i.e. $W=0$) four-dimensional gradient shrinking Ricci solitons are isometric to finite quotients of either $\Bbb{S}^4$, $\Bbb{R}^4$, or $\Bbb{S}^{3}\times\Bbb{R}.$ Recently, X. Chen and Y. Wang \cite{CW} proved that half-conformally flat, that is, $W^{+}=0$ or $W^{-}=0$, four-dimensional gradient shrinking Ricci solitons are isometric to finite quotients of $\Bbb{S}^4$, $\Bbb{CP}^2,$ $\Bbb{R}^4$, or $\Bbb{S}^{3}\times\Bbb{R}.$ Then, X. Cao and Tran \cite{CH} proved that  the condition  $W^+(\nabla f, \cdot, \cdot, \cdot)=0$ for a complete  four-dimensional gradient shrinking Ricci soliton can imply it is either Einstein or has $W^+=0$. In \cite{CaoChen}, Cao and Q. Chen  showed that complete Bach-flat four-dimensional gradient shrinking Ricci solitons are either Einstein or finite quotients of $\Bbb{R}^4$ or $\Bbb{S}^{3}\times\Bbb{R}$. It is known  that either locally conformally flat metrics, Einstein metrics, or half-conformally 
flat metrics on four-dimensional manifolds are Bach-flat.\\

On the other hand, it was  showed, by Fern\'{a}ndez-L\'opez and Garc\'ia-R\'io \cite{FLGR} in compact case, and  Munteanu and Sesum \cite{MS} in noncompact case, that complete gradient shrinking Ricci solitons with harmonic Weyl tensor (i.e. $\delta W=0$) are rigid, namely, they are either Einstein, or a finite quotient of $\Bbb{R}^4$, $\Bbb{S}^{2}\times\Bbb{R}^{2}$ or $\Bbb{S}^{3}\times\Bbb{R}.$ Recently, J. Wu, P. Wu and Wylie \cite{Wu} proved  that the same conclusion holds under the weaker condition of harmonic self-dual Weyl tensor (i.e. $\delta W^{+}=0$).   We would like to point out that  Kim \cite{MR3625140} proved that the condition of  local  harmonicity  of Weyl tensor implies local classification  of four dimensional gradient shrinking Ricci solitons. This condition implies that     the scalar curvature is constant.  There are also other classification results under the various pinching conditions involving the Weyl tensor $W$ (see  \cite{catinoAdv},  \cite{CH}, \cite{Catino},  \cite{Wu}, \cite{Zhang2}, \cite{CRZ}). \\

\begin{remark} 
In a forthcoming paper (\cite{CZ}), we discuss $4$-dimensional gradient expanding Ricci solitons with constant scalar curvature and get some similar features.  Due to the infinity of their weighted volume, the same results are not expected.
\end{remark}

\noindent {\bf Acknowledgment.}   The authors would like to thank Professor Huai-Dong Cao for his helpful suggestions.

\section{Gradient shrinking Ricci solitons}

\subsection{Notations and basic formulas}

Let  $(M, g)$   be an $n$-dimensional complete gradient  shrinking Ricci soliton with the potential function $f$, that is, it holds that  on $M$,
\begin{align}\label{soliton-1}
\text{Ric}+\nabla^2f=\lambda g
\end{align}
 for some  positive  real number $\lambda>0$.\\

By
scaling the metric $g$,  one can normalize $\lambda$ so that $\lambda=\frac{1}{2}$, that is,
\begin{align}\label{soliton-2}
\text{Ric}+\nabla^2f=\frac{1}{2} g.
\end{align}
 Tracing the soliton equation \eqref{soliton-2} we get 
 \begin{align}\label{prop1}
 \Delta f+S=\frac{n}{2},
 \end{align}
 where $S$ denotes the scalar curvature of $M$.\\
 
It is well known (see \cite{Ha1}) that  $S+|\nabla f|^2-f $ is constant and further   $f$ can be translated by a constant so that 
\begin{align}\label{prop2}
S+|\nabla f|^2=f. 
\end{align}

Also, by a result of  Chen \cite{chen}, the scalar curvature $S>0$ unless $
M$ is flat. So in the following, we will assume without loss of generality
that $S>0.$\\

For $M$, the drifted Laplacian  $\Delta _{f}=\Delta -\left\langle \nabla f,\nabla \right\rangle $ acting on the functions 
 is self-adjoint on the space of square
integrable functions with respect to the weighted measure $e^{-f}dv.$ In general, the drifted Laplacian $\Delta_f$ acting on tensors is given by $\Delta_f=\Delta-\nabla_{\nabla f}$.\\

Concerning the potential function $f,$ Cao and Zhou \cite{MR2732975} have proved
that

\begin{equation}\label{f}
\left( \frac{1}{2}r(x) -c\right) ^{2}\leq f(x) \leq
\left( \frac{1}{2}r(x) +c\right) ^{2} 
\end{equation}
for all $r(x) \geq r_{0}.$ Here $r(x):=d\left(
p,x\right)$ is the distance of $x$ to $p,$ a minimum point of $f$ on $M,$
which always exists. Both constants $r_{0}$ and $c$ can be chosen to depend
only on dimension $n.$\\

Throughout the paper, we denote 
\begin{eqnarray*}
D\left( t\right) &=&\left\{ x\in M:f(x)\leq \frac{t^2}4\right\} \\
\Sigma \left( t\right) &=&\partial D\left( t\right) =\left\{ x\in M:f(x) =\frac{t^2}4\right\} .
\end{eqnarray*}%
By (\ref{f}), these are compact subsets of $M$.\\

In this paper,  our  sign convention for curvatures is as follows:

$$\text{Rm}(X,Y)=\nabla^2_{Y,X}-\nabla^2_{X,Y}, \quad  \text{Rm}(X,Y,Z,W)=g(\text{Rm}(X,Y)Z,W),$$
$$ K(e_i,e_j)=\text{Rm}(e_i,e_j,e_i,e_j), \quad  \text{Ric}(X,Y)=\text{tr} \text{Rm}(X, \cdot, Y, \cdot), \quad S=\text{tr}\text{Ric}.$$
\vskip 0.2cm

We  recall the following equations for curvatures. For their proofs, one may
consult \cite{PW}. 

\begin{eqnarray}
\nabla _{l}R_{ijkl} &=&R_{ijkl}f_{l}=\nabla _{j}R_{ik}-\nabla _{i}R_{jk} \label{eq1}
\\
\nabla _{j}R_{ij} &=&R_{ij}f_{j}=\frac{1}{2}\nabla _{i}S  \label{eq2}\\
-R_{ikjl}f_lf_k&=&\frac12\nabla_i\nabla_j S+(\nabla_kf_{ij})f_k+R_{kj}(R_{ik}-\frac12 g_{ik}) \label{eq3} \\
 \Delta _{f}\mathrm{Rm} &=&\mathrm{Rm}+\mathrm{Rm}\ast \mathrm{Rm}  \label{eq4}\\
 \Delta _{f}R_{ij} &=&R_{ij}-2R_{ikjl}R_{kl}  \label{eq6}\\
\Delta _{f}S &=&S-2\left\vert \mathrm{Ric}\right\vert ^{2} =\langle\mathrm{Ric},g-2\mathrm{Ric}\rangle \label{id}.\label{eq5}
\end{eqnarray}

\eqref{eq3} was proved in   \cite[Proposition 1]{PW}. Since it is very important for the proof of Theorem \ref{thm-rig}, we give a proof here for the sake of completeness.

\begin{proof}
By \eqref{eq2}: $\nabla_jS=2R_{kj}f_k$, we have 
\begin{align*}
\nabla_i\nabla_jS&=2 (\nabla_iR_{kj})f_k+2R_{kj}(\nabla_i\nabla_kf)\\
&=2(\nabla_kR_{ij})f_k-2R_{ikjl}f_lf_k+2R_{kj}(\nabla_i\nabla_kf)\\
&=-2(\nabla_kf_{ij})f_k-2R_{ikjl}f_lf_k+2R_{kj}(\nabla_i\nabla_kf)\\
&= -2(\nabla_kf_{ij})f_k-2R_{ikjl}f_lf_k+2R_{kj}(\frac12 g_{ik}-R_{ik}).
\end{align*}
Then,
\begin{align*}
-2R_{ikjl}f_lf_k=\nabla_i\nabla_jS+2(\nabla_kf_{ij})f_k+2R_{kj}(R_{ik}-\frac12 g_{ik}).
\end{align*}
Multiplying $-\frac12$ on both sides of the above equality yields \eqref{eq3}.

\end{proof}

If the scalar curvature $S$ is constant, \eqref{eq5} implies 
\begin{align}
|\text{Ric}|^2&=\frac{S}{2},
\end{align}
and  \eqref{eq2} yields 
\begin{align}\label{ric4}
\text{Ric} (\nabla f, \cdot)=0.
\end{align}

\subsection{Isoparametric functions} 
For a general Riemannian manifold $M$, a non-constant $C^2$ function $f: M\rightarrow \mathbb{R}$ is called 
\emph{transnormal}  if 
\[ |\nabla f|^2=b(f) \]
for som $C^2$ function $b$ on the range of $f$ in $\mathbb{R}$. The function $ f$  is called
\emph{isoparametric} if, in addition,  it satisfies
\[\Delta f=a(f)\]
for some continous function $a$ on the range of $f$ in $\mathbb{R}$ (see \cite{Bol}, \cite{GT}, \cite{Mi},\cite{MR901710}).\\

 For a general Riemannian manifold $M$,  the preimage of the  minimum or maximum of a transnormal function $f$ is  the
\emph{focal variety} of $f$, denoted by $M_{-}$  or $M_{+}$.  They could be empty sets.
A fundamental structural result given in \cite{MR901710} says that the
focal varieties  of a transnormal function on a complete Riemannian manifold are smooth
submanifolds and each regular level hypersurface $\Sigma(t):=f^{-1}(\frac{t^2}4)$, $t>0$, is a tubular hypersurface
over   either of $M_{-}$ and $M_{+}$. 
Moreover, if $f$ is isoparametric,  the following result holds (see  \cite{MR901710}
and \cite{GT} for a proof):
\begin{theorem}\cite[Theorem D]{MR901710}\label{minimal focal}
The focal varieties of an isoparametric function $f$ on a
complete Riemannian manifold  are smooth minimal submanifolds.
\end{theorem}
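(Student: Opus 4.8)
The plan is to prove that the mean curvature vector of each focal variety vanishes; smoothness is already guaranteed by the structural result quoted just above, which moreover identifies every regular level set $\Sigma(t)=f^{-1}(\tfrac{t^2}{4})$ with a tubular hypersurface over $M_{-}$ or $M_{+}$. Two preliminary observations drive everything. First, transnormality $|\nabla f|^2=b(f)$ gives $\nabla_{\nabla f}\nabla f=\tfrac12\nabla|\nabla f|^2=\tfrac12 b'(f)\nabla f$, so the integral curves of the unit field $N=\nabla f/|\nabla f|$ are geodesics meeting every level set orthogonally; in particular $f$ depends only on the signed distance $r$ to a focal variety. Second, writing $\nabla f=|\nabla f|\,N$ and using $\Delta f=a(f)$ together with $|\nabla f|=\sqrt{b(f)}$, one computes
\begin{equation*}
\operatorname{div} N=\frac{\Delta f-\langle\nabla|\nabla f|,N\rangle}{|\nabla f|}=\frac{a(f)-\tfrac12 b'(f)}{\sqrt{b(f)}},
\end{equation*}
which is a function of $f$ alone. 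Hence every regular level set has constant mean curvature $H=\operatorname{div}N$, the same number at all of its points.

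Next I would localize near a focal variety, say $F=M_{+}$, of codimension $k$. Using the normal exponential map I parametrize a neighbourhood by $(p,\xi,r)$ with $p\in F$, $\xi$ a unit normal at $p$, and $r$ the distance to $F$, so that the tube $\Sigma_r$ of radius $r$ is exactly a regular level set and its points are indexed by the unit normal bundle. The shape operator $S(r)$ of $\Sigma_r$ along the geodesic $\gamma_\xi(r)=\exp_p(r\xi)$ satisfies the Riccati equation $S'+S^2+R_{\dot\gamma}=0$, with boundary behaviour dictated by $F$: the $k-1$ directions tangent to the normal sphere contribute eigenvalues $\tfrac1r+O(r)$, while the directions tangent to $F$ start from the shape operator $A_\xi$ of $F$ at $(p,\xi)$. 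Taking traces yields the expansion
\begin{equation*}
H(r,p,\xi)=\operatorname{div}N=\pm\Bigl(\frac{k-1}{r}+\operatorname{tr}A_\xi+O(r)\Bigr),\qquad r\to 0^{+},
\end{equation*}
whose $r^{0}$ term $\operatorname{tr}A_\xi=\langle \vec H_F(p),\xi\rangle$ is precisely the pairing of the (unnormalized) mean curvature vector of $F$ with $\xi$, and is therefore linear — hence odd — in $\xi$.

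Now I would feed this into the constancy established at the outset. For each fixed small $r$, $H(r,p,\xi)$ is independent of $(p,\xi)$, since it equals the single constant $\operatorname{div}N$ attached to the level set $\Sigma_r$. Matching the asymptotic expansions for all small $r$ forces each coefficient to be independent of $(p,\xi)$; in particular $\operatorname{tr}A_\xi$ is a constant $c$ as $(p,\xi)$ range over the unit normal bundle. When $k\ge 2$ the unit normal sphere is connected and contains $-\xi$ whenever it contains $\xi$, so oddness gives $c=\operatorname{tr}A_{-\xi}=-\operatorname{tr}A_\xi=-c$, whence $c=0$ and $\langle\vec H_F(p),\xi\rangle=0$ for every normal $\xi$; thus $\vec H_F\equiv 0$ and $F$ is minimal. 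The codimension-one case is handled by the same mechanism: there $f$ is an even function of the signed distance $r$ across $F$ (because $|df/dr|=\sqrt{b(f)}$ with $b$ having a simple zero), so the two sides of $F$ realize the two unit normals $\pm\xi$ with equal mean curvature $H(f)$, and comparing the two limits again yields $\operatorname{tr}A_\xi=0$, i.e. $F$ is totally geodesic. The argument for $M_{-}$ is identical.

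The main obstacle is the tube computation of the middle paragraph: one must control the Riccati/Jacobi-field asymptotics of $S(r)$ uniformly as $r\to0^{+}$ and isolate the finite term $\operatorname{tr}A_\xi$, checking that ambient-curvature contributions enter only at order $r$ and do not disturb the $r^{0}$ coefficient. Once that expansion is secured, the constancy of $\operatorname{div}N$ on level sets together with the oddness of $\operatorname{tr}A_\xi$ in $\xi$ makes the vanishing of the mean curvature vector essentially automatic.
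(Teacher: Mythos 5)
You should know at the outset that the paper does not prove this statement at all: it is quoted verbatim as Theorem D of Wang \cite{MR901710}, with \cite{GT} cited for a proof, so the only fair comparison is with the argument in those sources --- and your proposal is essentially that argument. Both Wang and Ge--Tang derive minimality exactly through your mechanism: transnormality makes the integral curves of $N=\nabla f/|\nabla f|$ geodesics, isoparametricity makes $\operatorname{div}N=\bigl(a(f)-\tfrac12 b'(f)\bigr)/\sqrt{b(f)}$ a function of $f$ alone (so the regular levels are CMC tubes over the focal variety), and the Riccati/tube expansion $H(r,p,\xi)=\pm\bigl(\tfrac{k-1}{r}+\operatorname{tr}A_\xi+O(r)\bigr)$ combined with the oddness of $\operatorname{tr}A_\xi$ in $\xi$ kills the mean curvature vector. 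Three remarks on your write-up. First, you can dispense with the appeal to connectedness of the unit normal sphere: for $k\ge 2$ the points $\exp_p(r\xi)$ and $\exp_p(-r\xi)$ lie on the \emph{same} level set, so $H(r,p,\xi)=H(r,p,-\xi)$ for all small $r$; subtracting $(k-1)/r$ and letting $r\to 0^{+}$ gives $\operatorname{tr}A_\xi=-\operatorname{tr}A_\xi=0$ pointwise, with no need for uniformity of the $O(r)$ term over the normal bundle. Second, in the codimension-one case your ``i.e.\ $F$ is totally geodesic'' overstates what the computation yields: matching the two one-sided limits gives only $\operatorname{tr}A_\xi=0$, i.e.\ minimality, which is all the theorem asserts (total geodesy would require matching the full shape operators, not just their traces). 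Third, your flagged ``main obstacle'' --- uniform control of the Riccati asymptotics as $r\to 0^{+}$ so that the $r^{0}$ coefficient is genuinely $\pm\operatorname{tr}A_\xi$ --- is precisely the technical content of Wang's Lemma 6 and of the standard tube formula; it rests on the smoothness of the focal variety and the identification of regular levels with geodesic tubes, both of which you correctly import from the quoted structural result rather than reprove, so your argument is complete modulo exactly the ingredients the paper also takes from \cite{MR901710} and \cite{GT}.
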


Now we come back to the case of complete gradient shrinking Ricci solitons with constant scalar curvature $S$. The  potential function $f$  can be renormalized, by replacing $f-S$  with $f$, so that $f:M\to [0, +\infty)$,  

\begin{equation}\label{iso1}
|\nabla f|^2=f,
\end{equation}
and
\begin{equation}\label{iso2}
\triangle f=\frac{n}2-S.
\end{equation}
Therefore  the (nonconstant) renormalized $f$ is an isoparametric function on $M$. From the potential function estimate \eqref{f},   $f$ is proper and  is unbounded. 
 By the theory of isoparametric functions, these properties together with 
(\ref{iso1}) imply that the minimum $0$ of $f$  is the unique singular value of $f$  and  the regular level hypersurfaces $\Sigma(t)=f^{-1}(\frac{t^2}4)$, $t>0$, are parallel. Moreover \eqref{iso2} implies that  these hypersurfaces have constant mean
curvatures. These  regular level hypersurfaces $\Sigma(t)$, $t>0,$  are called \emph{isoparametric
hypersurfaces}. \\

In the following, we derive some facts for complete gradient shrinking Ricci solitons with constant scalar curvature $S$. 
\begin{theorem}\label{thm-cscs} 
 Let $(M, g, f)$
be an $n$-dimensional  complete noncompact gradient shrinking Ricci soliton satisfying \eqref{soliton-2} with constant scalar curvature $S$ and let $f$ be normalized as 
\begin{align}|\nabla f|^2=f. 
\end{align}
 Then (i) $M_{-}=f^{-1}(0)$ is a compact and  connected minimal submanifold of $M$. 
 
 (ii) The function $f$ can be expressed as $$f(x)=\frac14 \dist^2 (x, M_{-}).$$
 
 (iii) For any point $p\in M_{-}$, $\mathrm{Hess}(f)$ has two eigenspaces $T_pM_{-}$ and $\nu_pM_{-}$ corresponding eigenvalues $0$ and $\frac12$, and $\dim(M_{-})=2S$.\\
 
 (iv) Let $D_t:=\{x\in M; \dist (x, M_{-})<t\},$ for $t>0$. Then  mean curvature $H(t)$ of the smooth hypersurface $\Sigma(t)=\partial D_t$ satisfies
\[H(t)=\frac{n-2S-1}t.\]
 
 (v) The volume of  the set $D_t$ satisfies  
 \[\textrm{vol} (D_t)= \frac{1}{k}|M_{-}|\omega_k t^k,\]
  where  $k=n-2S$, $|M_{-}|$ denotes the volume of the submanifold $M_{-}$, and  $\omega_{k-1}$ is the area of the unit sphere in $\mathbb{R}^k$.
\end{theorem}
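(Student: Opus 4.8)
The plan is to treat the renormalized potential $f$ as a distance-type function to its zero set $M_-=f^{-1}(0)$ and to extract all five statements from the normalized soliton identities \eqref{iso1}, \eqref{iso2} together with the isoparametric structure recalled above. First I would establish (ii), from which (i) largely follows. Since $f\ge 0$ is proper by \eqref{f}, it attains its minimum $0$, so $M_-$ is nonempty and compact, and \eqref{iso1} forces $\nabla f=0$ there. The key point is that the integral curves of $\nu:=\nabla f/|\nabla f|=\nabla f/\sqrt f$ are unit-speed geodesics: differentiating \eqref{iso1} gives $\Hess(f)(\nabla f,\cdot)=\tfrac12\langle\nabla f,\cdot\rangle$, hence $\nabla_{\nabla f}\nabla f=\tfrac12\nabla f$, from which $\nabla_\nu\nu=0$ follows by a short computation. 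Along such a curve $\tfrac{d}{ds}\sqrt f=\tfrac12$, so $\sqrt f$ grows linearly from $M_-$; conversely, along any minimizing geodesic to $M_-$ one has $|\tfrac{d}{ds}\sqrt f|\le\tfrac12$. Combining the two inequalities yields $f=\tfrac14\dist^2(\cdot,M_-)$, which is (ii). Smoothness and minimality of $M_-$ then come directly from the isoparametric property of $f$ via Theorem \ref{minimal focal}, and connectedness follows because the reparametrized flow along $-\nu$ deformation retracts $M$ onto $M_-$ (each geodesic reaches $M_-$ in finite length $2\sqrt{f}$), so $M_-$ inherits connectedness from $M$.

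For (iii), at $p\in M_-$ I would split $T_pM=T_pM_-\oplus\nu_pM_-$. For $X\in T_pM_-$, since $\nabla f\equiv0$ along $M_-$ its covariant derivative in tangential directions vanishes, giving $\Hess(f)(X,\cdot)=0$, i.e. eigenvalue $0$ on $T_pM_-$. For a unit normal $v\in\nu_pM_-$, the geodesic $\gamma$ with $\gamma'(0)=v$ satisfies $f(\gamma(s))=\tfrac14 s^2$ by (ii), so $\Hess(f)(v,v)=\tfrac{d^2}{ds^2}f(\gamma(s))=\tfrac12$; thus every normal vector is a $\tfrac12$-eigenvector. Taking the trace, $\Delta f=\tfrac12\dim\nu_pM_-=\tfrac12(n-\dim M_-)$, and comparing with \eqref{iso2} gives $\dim M_-=2S$.

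Parts (iv) and (v) are then computations. The mean curvature of $\Sigma(t)$ is $H=\mathrm{div}\,\nu=\mathrm{div}(\nabla f/\sqrt f)=\tfrac{1}{\sqrt f}\big(\Delta f-\tfrac12\big)$; inserting \eqref{iso2} and $f=t^2/4$ on $\Sigma(t)$ yields $H(t)=(n-2S-1)/t=(k-1)/t$ with $k=n-2S$. For the volume, writing $r=\dist(\cdot,M_-)$ so that $\Sigma(t)=\{r=t\}$ and $|\nabla r|=1$, the area $A(t)$ satisfies $A'(t)=\int_{\Sigma(t)}H\,d\sigma=\tfrac{k-1}{t}A(t)$, hence $A(t)=c\,t^{k-1}$; the constant is fixed by the tube asymptotics $A(t)\sim|M_-|\,\omega_{k-1}t^{k-1}$ as $t\to0$ around the codimension-$k$ submanifold $M_-$. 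The coarea formula $\mathrm{vol}(D_t)=\int_0^tA(s)\,ds$ then gives $\mathrm{vol}(D_t)=\tfrac1k|M_-|\,\omega_{k-1}t^{k}$.

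The main obstacle I anticipate lies in (iii): the passage from the single normal direction $\nabla f$ available at regular points, where $\text{Ric}(\nabla f,\cdot)=0$ by \eqref{eq2} and $\nabla f$ is a $\tfrac12$-eigenvector of $\Hess(f)$, to the full $k$-dimensional normal space at $M_-$. A priori the eigenvalue $\tfrac12$ is known in only one normal direction per regular point, and one must invoke the global identification $f=\tfrac14\dist^2$ --- equivalently, that the normal exponential map of $M_-$ has no focal points, a rigidity guaranteed by the smoothness of $f$ --- to conclude that all of $\nu_pM_-$ lies in the $\tfrac12$-eigenspace. This same global distance description is precisely what makes the mean curvature exactly $(k-1)/t$ and hence the volume formula in (v) exact rather than merely asymptotic.
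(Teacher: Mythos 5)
Your proposal is correct, and for parts (i), (ii), (iv) and (v) it follows essentially the paper's route: the paper also identifies $2\sqrt{f}$ as the distance to $M_-$ via the transnormal structure (computing $\dist\big(f^{-1}(a),f^{-1}(b)\big)=\int_a^b df/\sqrt{f}=2\sqrt{b}-2\sqrt{a}$, which is your two-sided ODE estimate along integral curves of $\nu$ in integrated form), gets compactness from properness and connectedness from $0$ being the only critical value (your retraction argument), computes $H(t)$ from $H=\big(\Delta f-\Hess f(\nu,\nu)\big)/|\nabla f|$ exactly as your divergence computation does, and proves (v) by the same combination of first variation, $V''(t)=\frac{k-1}{t}V'(t)$, and the $t\to 0$ tube-area limit to fix the constant $|M_-|\,\omega_{k-1}$. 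The one genuine divergence is (iii): the paper does not prove it, instead citing Fern\'andez-L\'opez--Garc\'ia-R\'io \cite{FR} and Lemma 6 of Wang \cite{MR901710}, whereas you derive it directly from (ii) --- tangential vectors lie in the kernel of $\Hess(f)$ because $\nabla f$ vanishes identically along $M_-$, each unit normal $v$ satisfies $\Hess f(v,v)=\frac{d^2}{ds^2}\big(\tfrac{s^2}{4}\big)=\tfrac12$ along the normal geodesic (minimizing to $M_-$ for small $s$ by the tubular neighborhood theorem), polarization then gives $\Hess f=\tfrac12 g$ on all of $\nu_pM_-$, and tracing against \eqref{iso2} yields $\dim M_-=2S$. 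This makes the statement self-contained where the paper relies on external structural results, and the obstacle you flag --- passing from the one normal direction $\nabla f$ available at regular points to the full normal space at $M_-$ --- is resolved precisely by your own prior identification $f=\tfrac14\dist^2(\cdot,M_-)$, so the worry is moot once (ii) is in hand. The only cosmetic caveat is that the constant in (v) is $\omega_{k-1}$, the area of the unit sphere in $\mathbb{R}^k$, as in your derivation and in the paper's proof; the $\omega_k$ in the theorem's display is a typo of the paper, not an error of yours.
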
 

\begin{proof} (i) From Theorem D in \cite{MR901710}, $\Sigma$ is a smooth submanifold. 
Since the function $f$ is proper, $M_{-}=f^{-1}(0)$ is compact. The connectedness of $M_{-}$ follows from the fact that $0$ is the unique critical value of $f$.\\

  To prove (ii), we continue to use transnormality of the potential function $f$. The level sets of $f$ form a foliation of transnormal system and distance between the level sets are realized by integrabale curves of $\nabla f$ which are orthogonal to every leaf.  Therefore, for any  $0<a<b$, there is a normalized geodesic $\gamma: [0,l]\to M$ realizing the distance $\dist\big(f^{-1}(a),f^{-1}(b)\big)$ such that $\gamma'\parallel \nabla f$, and 
\[
\begin{split}
\dist\big(f^{-1}(a), f^{-1}(b)\big)&=\int_0^l|\gamma'(t)|dt\cr
&=\int_0^l\frac{|(f\circ \gamma)'(t)|}{|\nabla f|}dt\cr
&=\int_a^b\frac{df}{|\nabla f|}=\int_a^b\frac{df}{\sqrt{f}}=2\sqrt{b}-2\sqrt{a}.
\end{split}
\]
Letting  $a\to 0$, we get the conclusion in (ii).\\

The general case of (iii)  was proved in \cite{FR}. In fact, it follows from Lemma 6 in \cite{MR901710}.\\

(iv) Since $S$ is constant,  $0=\nabla S=2\text{Ric}(\nabla f)$ and $\textrm{Hess} f(\nabla f,\nabla f)=\frac12|\nabla f|^2.$ Since $\partial D_t=f^{-1}(\frac{t^2}4)$, the mean curvature $H(t)$ is 
\[
\begin{split}
H(t)&=\frac{\Delta f-\textrm{Hess} f(\frac{\nabla f}{|\nabla f|},\frac{\nabla f}{|\nabla f|})}{|\nabla f|}\\
&=\frac{\frac{n}2-S-\frac12}{\frac{t}2}\\
&=\frac{n-2S-1}{t}
\end{split}
\]
(v)
Denote $V(t)=\textrm{vol}(D_t)$ and $k=n-2S$. Then the co-area formula implies $V'(t)=\int_{\partial D_t}$ and the first variation formula implies
\[V''(t)=\int_{\partial D_t}H(t)=\frac{k-1}{t}V'(t).
\]
Therefore $t^{-k+1}V'(t)$ is a constant. On the other hand,  by the area  formula of the tubular hypersurfaces, we have  
\begin{align}\label{tubevol}
\displaystyle t^{-k+1}V'(t)=\int_{M_{-}}\int_{\mathbb{S}^{k-1}(1)}\nu_u(p, t)dudv^{M_{-}},
\end{align}
where $\nu_u(p, t)$ denotes the infinitesimal change of the volume function of $M_{-}$ in the direction $u\in T_p^{\perp}M_{-}, |u|=1, p\in M_{-}$.  Letting $t\rightarrow 0$ in \eqref{tubevol}, we get that $t^{-k+1}V'(t)$
 is equal to $|M_{-}|\omega_{k-1}$. This implies that $V(t)= \frac{1}{k}|M_{-}|\omega_{k-1} t^k$.

\end{proof}

\section{Rigidity of $4$-dimensional gradient shrinking Ricci solitons}

In this section, the gradient shrinking Ricci soliton $(M, g, f)$ is assumed to have constant scalar curvature.
For convenience, we choose $\lambda=\frac12$ and the normalized potential function $f$ so that $(M, g, f)$ satisfies 
\begin{align}\label{soliton-3}
\text{Ric}+\nabla^2f=\frac{1}{2} g
\end{align}
and
 \begin{align}\label{f-norm}
|\nabla f|^2=f. 
\end{align}

\eqref{soliton-3} implies that
\begin{align}\label{delta}
 \Delta f+S=\frac{n}{2},
 \end{align}
 where $S$ denotes the scalar curvature of $M$.\\
 
  Now we consider the case that $S=2\lambda=1$ with motivation stated in the Introduction. We first calculate the drifted Laplacian of the trace of $\text{Ric}^3$ in this case and obtain the following results:

 
\begin{proposition}\label{tr}	
Let  $(M^4, g, f)$ be a $4$-dimensional complete normalized gradient shrinking Ricci soliton satisfying \eqref{soliton-3} and \eqref{f-norm} with the scalar curvature $S=1$. Then
\begin{itemize}
\item [ ] (i) $\displaystyle\Delta_f\textrm{tr}(\text{Ric}^3)=4\textrm{tr}(\textrm{Ric}^3)-1-\frac32|\nabla Ric|^2
   +6(\nabla \text{Ric})\ast (\nabla \text{Ric}) \ast \text{Ric}.$\\
   
\item [ ] (ii)  $\Delta_f\textrm{tr}(Ric^3)\geq 4\textrm{tr}(\textrm{Ric}^3)-1-\dfrac32|\nabla Ric|^2.$
\end{itemize}
Here,    $(\nabla \text{Ric})\ast (\nabla \text{Ric}) \ast \text{Ric}=(\nabla_sR_{ij})(\nabla_sR_{jk})R_{ki}$ under local orthonormal frame.

\end{proposition}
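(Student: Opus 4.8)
The plan is to compute $\Delta_f\operatorname{tr}(\text{Ric}^3)$ by the product rule for the drifted Laplacian, substitute the Ricci evolution equation \eqref{eq6}, and then evaluate the single quartic curvature contraction that remains by exploiting the three-dimensionality of the level sets of $f$. Concretely, writing $\operatorname{tr}(\text{Ric}^3)=R_{ij}R_{jk}R_{ki}$, the symmetry of $\text{Ric}$ gives $\nabla_s\operatorname{tr}(\text{Ric}^3)=3(\nabla_sR_{ij})(\text{Ric}^2)_{ij}$, and a second differentiation together with $\Delta_f=\Delta-\nabla_{\nabla f}$ yields
\[
\Delta_f\operatorname{tr}(\text{Ric}^3)=3(\Delta_fR_{ij})(\text{Ric}^2)_{ij}+6(\nabla_sR_{ij})(\nabla_sR_{jk})R_{ki}.
\]
Feeding in \eqref{eq6}, $\Delta_fR_{ij}=R_{ij}-2R_{iajb}R_{ab}$, turns this into
\[
\Delta_f\operatorname{tr}(\text{Ric}^3)=3\operatorname{tr}(\text{Ric}^3)-6W+6(\nabla\text{Ric})\ast(\nabla\text{Ric})\ast\text{Ric},\qquad W:=R_{iajb}R_{ab}(\text{Ric}^2)_{ij}.
\]
Thus (i) is \emph{equivalent} to the pointwise identity $W=\tfrac16-\tfrac16\operatorname{tr}(\text{Ric}^3)+\tfrac14|\nabla\text{Ric}|^2$, and the whole matter reduces to evaluating $W$.

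To evaluate $W$, I would use that $\text{Ric}(\nabla f,\cdot)=0$ by \eqref{ric4}, so $\nabla f$ is a null eigendirection of $\text{Ric}$, and pick an orthonormal frame $e_0=\nabla f/\sqrt f,e_1,e_2,e_3$ with $e_1,e_2,e_3$ tangent to $\Sigma$ and diagonalizing $\text{Ric}|_{T\Sigma}$ with eigenvalues $\mu_1,\mu_2,\mu_3$ (and $\mu_0=0$). Since $R_{ab}$ and $(\text{Ric}^2)_{ij}$ are then diagonal, only sectional curvatures survive and the radial ones drop out, giving $W=\sum_{\alpha\ne\beta}K(e_\alpha,e_\beta)\,\mu_\alpha^2\mu_\beta$, with $\sum\mu_\alpha=1$ and $\sum\mu_\alpha^2=|\text{Ric}|^2=\tfrac12$ by \eqref{eq5}. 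This is the heart of the argument: by Theorem \ref{thm-cscs} and the isoparametric structure the second fundamental form is algebraic in $\text{Ric}$, namely $\kappa_\alpha=\tfrac1{\sqrt f}(\tfrac12-\mu_\alpha)$ (so $H=\tfrac1{2\sqrt f}$, $H^2-|A|^2=0$); the Gauss equation relates $K(e_\alpha,e_\beta)$ to the intrinsic curvature of $\Sigma$; because $\Sigma$ is three-dimensional its curvature is determined by its Ricci tensor, $K^\Sigma(e_\alpha,e_\beta)=\text{Ric}^\Sigma_{\alpha\alpha}+\text{Ric}^\Sigma_{\beta\beta}-\tfrac12 S^\Sigma$; and the contracted Gauss equation expresses $\text{Ric}^\Sigma$ through the ambient $\text{Ric}$, the shape operator, and the radial curvatures $R_{\alpha0\beta0}$, for which I substitute \eqref{eq3} with $S$ constant to get $R_{\alpha0\beta0}=\tfrac1f\big[(\nabla_{\nabla f}\text{Ric})_{\alpha\beta}-(\text{Ric}^2)_{\alpha\beta}+\tfrac12R_{\alpha\beta}\big]$. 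I expect the algebra to collapse to the clean $\text{Ric}^\Sigma_{\alpha\alpha}=\mu_\alpha-\tfrac1f(\nabla_{\nabla f}\text{Ric})_{\alpha\alpha}$ and $S^\Sigma=1$. Expanding $W$ then requires $\operatorname{tr}(\text{Ric}^4)$ and $\operatorname{tr}(\text{Ric}^5)$, which I eliminate by Newton's identities for the three eigenvalues constrained by $\mu_1+\mu_2+\mu_3=1$, $\sum\mu_\alpha^2=\tfrac12$, obtaining $\operatorname{tr}(\text{Ric}^4)=\tfrac43\operatorname{tr}(\text{Ric}^3)-\tfrac5{24}$ and a similar affine expression for $\operatorname{tr}(\text{Ric}^5)$.

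The subtle point is that this geometric evaluation produces, besides a polynomial in $\operatorname{tr}(\text{Ric}^3)$, some \emph{non-algebraic} leftovers of the form $\tfrac1f\nabla_{\nabla f}\operatorname{tr}(\text{Ric}^3)$ and $\tfrac1f$-terms (using $\langle\nabla_{\nabla f}\text{Ric},\text{Ric}^k\rangle=\tfrac1{k+1}\nabla_{\nabla f}\operatorname{tr}(\text{Ric}^{k+1})$ and $\nabla_{\nabla f}|\text{Ric}|^2=0$), not $|\nabla\text{Ric}|^2$. To convert them I would run the same computation for the cubic contraction $V:=R_{iajb}R_{ab}R_{ij}$, and separately note that the Bochner identity coming from $\Delta_f|\text{Ric}|^2=0$ (via \eqref{eq5} and constancy of $S$) gives the purely analytic value $V=\tfrac14+\tfrac12|\nabla\text{Ric}|^2$. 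The point is that the geometric evaluations of $V$ and of $W$ carry exactly the same $\tfrac1f\nabla_{\nabla f}\operatorname{tr}(\text{Ric}^3)$ and $\tfrac1f$-terms, in the ratio $2:1$, so the combination $V-2W$ is purely algebraic and equals $\tfrac13\operatorname{tr}(\text{Ric}^3)-\tfrac1{12}$; combining this with $V=\tfrac14+\tfrac12|\nabla\text{Ric}|^2$ solves for $W=\tfrac16-\tfrac16\operatorname{tr}(\text{Ric}^3)+\tfrac14|\nabla\text{Ric}|^2$, which is (i). Part (ii) is then immediate: in (i) the term $6(\nabla\text{Ric})\ast(\nabla\text{Ric})\ast\text{Ric}=6\sum_s\operatorname{tr}\big((\nabla_s\text{Ric})^2\text{Ric}\big)$ is a sum of $\operatorname{tr}(PQ)$ with $P=(\nabla_s\text{Ric})^2\ge0$ and $Q=\text{Ric}\ge0$ (the latter by \cite[Theorem 5]{FR}), hence nonnegative and can be dropped.

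The main obstacle is the combination of the Gauss/contracted-Gauss computation with the $|\text{Ric}|^2=\mathrm{const}$ Bochner identity: one must carry the level-set curvature computation with the correct sign conventions (for the Gauss equation, the three-dimensional Ricci formula, and \eqref{eq3}) so that all the $\tfrac1f$ and $\nabla_{\nabla f}$ contributions cancel in precisely the right proportion, and one must recognize that the quadratic quantity $|\nabla\text{Ric}|^2$ cannot emerge from the algebraic contraction $W$ alone but only after invoking $\Delta_f|\text{Ric}|^2=0$. The bookkeeping of the power sums and the matching of frames is routine but delicate, and the $V$-versus-$W$ cancellation is the mechanism that makes the final coefficients come out clean.
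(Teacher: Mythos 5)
Your proposal is correct, and its final engine coincides with the paper's: product rule plus \eqref{eq6} to reduce everything to the quartic contraction $W=R_{isjt}R_{st}(\mathrm{Ric}^2)_{ij}$, diagonalization of $\mathrm{Ric}$ with the null eigenvalue $a_4=0$ along $\nabla f$, the Bochner identity $\Delta_f|\mathrm{Ric}|^2=0$ to evaluate the cubic contraction $V=R_{ikjl}R_{ij}R_{kl}=\frac14+\frac12|\nabla\mathrm{Ric}|^2$ (this is exactly \eqref{ric-square-1}), the symmetric-function identity \eqref{product}, and $\mathrm{Ric}\ge 0$ to discard the gradient term for (ii) (you cite \cite[Theorem 5]{FR}; the paper gives a three-line self-contained argument from $\sum a_\alpha=1$, $\sum a_\alpha^2=\frac12$). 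Where you genuinely diverge is in how you justify the key algebraic fact that $V-2W$ is purely algebraic: you propose to run the full level-set machinery (second fundamental form $h_{\alpha\beta}=\frac{1/2-a_\alpha}{\sqrt f}\delta_{\alpha\beta}$, Gauss equations, the three-dimensional curvature decomposition, \eqref{eq3} for the radial curvatures, elimination of $\mathrm{tr}(\mathrm{Ric}^4)$ and $\mathrm{tr}(\mathrm{Ric}^5)$ by Newton's identities) for \emph{both} $V$ and $W$ and then observe that the $\frac1f\nabla_{\nabla f}\mathrm{tr}(\mathrm{Ric}^3)$ and $\frac1f$ leftovers cancel in ratio $2{:}1$. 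That cancellation is true, but it is the disguised form of a two-line pointwise identity that the paper uses instead: since $a_\alpha+a_\beta=1-a_\gamma$ and $K_{12}+K_{13}+K_{23}=\frac{S}{2}=\frac12$ (because $K_{14}+K_{24}+K_{34}=R_{44}=0$), one has directly $W=\sum_{\alpha\ne\beta}K_{\alpha\beta}a_\beta a_\alpha^2=\frac12 V-\frac12 a_1a_2a_3$, i.e.\ $V-2W=a_1a_2a_3=\frac13\mathrm{tr}(\mathrm{Ric}^3)-\frac1{12}$, with no Gauss equation at all (this is \eqref{trace-1}). The paper reserves the entire tube/Gauss-equation computation — which does produce the $\frac1f\nabla_{\nabla f}\mathrm{tr}(\mathrm{Ric}^3)$ terms you anticipate, see \eqref{eq-product2} — for the later and harder Lemma \ref{lemma2}, where $|\nabla\mathrm{Ric}|^2$ must be bounded by curvature data rather than merely carried along. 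So your route would work and yields the same identity $W=\frac16-\frac16\mathrm{tr}(\mathrm{Ric}^3)+\frac14|\nabla\mathrm{Ric}|^2$, but it front-loads machinery (and a delicate sign-convention bookkeeping plus a $\mathrm{tr}(\mathrm{Ric}^5)$ elimination) that the proposition does not need; what the detour buys you is that, once done, you have already assembled most of the ingredients the paper needs for Lemma \ref{lemma2}, whereas the paper's proof of the proposition is self-contained in half a page.
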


\begin{proof}
We firstly  calculate the drifted Laplacian $\Delta_f$ of the  tensor $\textrm{Ric}^3$. Note that $\Delta_f=\Delta-\nabla_{\nabla f}$. Under a local orthonormal frame $\{e_1, e_2, e_3, e_4\}$, a direct computation gives
\begin{align*}
\Delta_f& (\textrm{Ric}^3)_{il}\\
=&(\Delta_f R_{ij})R_{jk}R_{kl}+ R_{ij}(\Delta_f R_{jk})R_{kl}+R_{ij}R_{jk}(\Delta_f R_{kl})\\
   &+2(\nabla_sR_{ij})(\nabla_sR_{jk})R_{kl}+2(\nabla_sR_{ij})R_{jk}(\nabla_sR_{kl}) +2R_{ij}(\nabla_sR_{jk})(\nabla_sR_{kl})\\
   =&3(\textrm{Ric}^3)_{il}-2(R_{isjt}R_{st}R_{jk}R_{kl}+R_{jskt}R_{st}R_{ij}R_{kl}+R_{kslt}R_{st}R_{ij}R_{jk})\\
   &+2(\nabla_sR_{ij})(\nabla_sR_{jk})R_{kl}+2(\nabla_sR_{ij})R_{jk}(\nabla_sR_{kl}) +2R_{ij}(\nabla_sR_{jk})(\nabla_sR_{kl}).
\end{align*}
In the last equality, we have used  \eqref{eq6}.
Then,  the drifted Laplacian of the trace of $\text{Ric}^3$ is as follows:
\begin{align}
\Delta_f \big(\textrm{tr}(\textrm{Ric}^3)\big)&=3\textrm{tr}(\textrm{Ric}^3)+2(\nabla_sR_{ij})(\nabla_sR_{jk})R_{ki}\nonumber\\
   &\quad +2(\nabla_sR_{ij})R_{jk}(\nabla_sR_{ki}) +2R_{ij}(\nabla_sR_{jk})(\nabla_sR_{ki})\nonumber\\
   &\qquad-2(R_{isjt}R_{st}R_{jk}R_{ki}+R_{jskt}R_{st}R_{ij}R_{ki}+R_{ksit}R_{st}R_{ij}R_{jk})\nonumber\\
   &=3\textrm{tr}(\textrm{Ric}^3)-6R_{isjt}R_{st}R_{jk}R_{ki}
   +6(\nabla_sR_{ij})(\nabla_sR_{jk})R_{ki}. \label{eq-trace}
\end{align}

Fix  $p\in M$ and assume $R_{ij}=a_i\delta_{ij}$ at $p$, that is, $a_i$, $i=1, \cdots, 4$, are the eigenvalues of the Ricci tensor at $p$. Hence,
\[R_{isjt}R_{st}R_{jk}R_{ki}=\sum_{i=1}^4\sum_{s=1}^4R_{isis}a_sa_i^2.
\]
Since $S$ is constant on $M$, $\text{Ric}(\nabla f, \cdot)=0$ and  $\nabla f$ is the eigenvector of Ricci curvature corresponding to the eigenvalue $0$. We can arrange  the eigenvalues of Ricci curvature  so that $a_4=0$ is the corresponding eigenvalue of $\nabla f$. By this manner, we have
\[a_1+a_2+a_3=S=1\]
and 
\begin{align}\label{term-curvature}
R_{isjt}R_{st}R_{jk}R_{ki}=\sum_{i=1}^3\sum_{s=1}^3R_{isis}a_s a_{i}^2.
\end{align}
Denote  the sectional curvature $K(e_i, e_s)=R_{isis}$ by $K_{is}$. 
We get
\begin{align}
R_{isjt}R_{st}&R_{jk}R_{ki}\nonumber\\
&=K_{12}a_1a_2^2+K_{13}a_1a_3^2+K_{23}a_2a_3^2+K_{12}a_1^2a_2+K_{13}a_1^2a_3+K_{23}a_2^2a_3\nonumber\\
&=K_{12}a_1a_2(a_1+a_2)+K_{13}a_1a_3(a_1+a_3)+K_{23}a_2a_3(a_2+a_3)\nonumber
\\
&=K_{12}a_1a_2(1-a_3)+K_{13}a_1a_3(1-a_2)+K_{23}a_2a_3(1-a_1)\nonumber
\\
&=K_{12}a_1a_2+K_{13}a_1a_3+K_{23}a_2a_3-(K_{12}+K_{13}+K_{23})a_1a_2a_3\nonumber\\
&=K_{12}a_1a_2+K_{13}a_1a_3+K_{23}a_2a_3-\frac12 a_1a_2a_3.\label{trace-1}
\end{align}

\noindent Here we have used $a_1+a_2+a_3=1$  in the third equality in \eqref{trace-1} and $ K_{12}+K_{13}+K_{23}=\frac{S}2=\frac12$  in the fourth equality by noting that  $K_{14}+K_{24}+K_{34}=R_{44}=a_{4}=0$.\\

\noindent Substituting \eqref{trace-1} into \eqref{eq-trace} yields 
\begin{align}
\Delta_f (\textrm{tr}(\textrm{Ric}^3&))\nonumber\\
   =&3\textrm{tr}(\textrm{Ric}^3)-6(K_{12}a_1a_2+K_{13}a_1a_3+K_{23}a_2a_3-\frac12a_1a_2a_3)\nonumber\\
  &\quad +6\sum_{i,s=1}^3\sum_{j=1}^4(\nabla_sR_{ij})(\nabla_sR_{ji})a_i. \label{trace-2} 
\end{align}
By \eqref{eq6}, it holds that on $M$,
\begin{align}\label{ric-square}
\Delta_f|\text{Ric}|^2=2|\text{Ric}|^2+2|\nabla \text{Ric}|^2-4R_{ij}R_{ikjl}R_{kl}.
\end{align}
Since  \eqref{eq5} implies $|\text{Ric}|^2=\frac{S}{2}=\frac12$ is constant,  \eqref{ric-square} becomes
\begin{align}\label{ric-square-1}1+2|\nabla \text{Ric}|^2-4R_{ij}R_{ikjl}R_{kl}=0.
\end{align}
Note that  $R_{ij}=a_i\delta_{ij}, i,j=1, \cdots, 4$, and $a_4=0$ at $p$.  \eqref{ric-square-1} yields
\begin{align}\label{ric-square-2}
K_{12}a_1a_2+K_{13}a_1a_3+K_{23}a_2a_3=\frac18+\frac14|\nabla\text{Ric}|^2.
\end{align}
On the other hand, 
  letting $a=a_1, b=a_2$, and $c=a_3$ in the  following algebraic identity:
\[
(a+b+c)^3=3(a+b+c)(a^2+b^2+c^2)-2(a^3+b^3+c^3)+6abc,
\]
and noting that $\sum_{i=1}^3a_i=S=1$ and $\sum_{i=1}^3a_i^2=|\text{Ric}|^2=\frac12$, we have
\begin{align}\label{product}
a_1a_2a_3=\frac13\textrm{tr}(\text{Ric}^3)-\frac1{12}.
\end{align}
Plugging  \eqref{product} and \eqref{ric-square-2} into \eqref{trace-2}, we get
\begin{align}\label{tr-3}
\Delta_f (\textrm{tr}(\textrm{Ric}^3))&\nonumber\\
  =&3\textrm{tr}(\textrm{Ric}^3)-6(\frac18+\frac14|\nabla \textrm{Ric}|^2)+3(\frac13\textrm{tr}(\textrm{Ric}^3)-\frac1{12})\nonumber\\
   &\quad +6\sum_{i,s=1}^3(\nabla_sR_{ij})(\nabla_sR_{ji})a_i \nonumber  \\
   =&4\textrm{tr}(\textrm{Ric}^3)-1-\frac32|\nabla \text{Ric}|^2
   +6\sum_{i, s=1}^3\sum_{j=1}^4(\nabla_sR_{ij})^2a_i.
\end{align}
Hence we get  (i).  \\

It was proved in \cite[Proposition 12]{FR} that for a complete gradient shrinking  Ricci soliton
with constant scalar curvature $S = k\lambda$,  if the rank of Ricci curvature is  no more than  $k+1$, then the Ricci curvature
is non-negative. In particular, a four-dimensional complete  gradient shrinking  Ricci soliton
with  $S = 2\lambda$ has  nonnegative Ricci curvature \cite[Theorem 5]{FR}. Here we  give a simple proof   for this special case.\\

Otherwise, suppose that  one of eigenvalues $a_1, a_2, a_3$ is negative, say $a_3<0$.
Noting that $a_1+a_2+a_3=1$, we have $a_1+a_2>1$ and $a_1^2+a_2^2\geq \frac{(a_1+a_2)^2}2>\frac12,$ which is a contradiction with  $a_1^2+a_2^2+a_3^2=\frac12$.
Then $a_i\geq 0$, $i=1, 2, 3$. This gives that  the Ricci curvature is nonnegative.\\

By $a_i\geq 0$, $i=1, 2, 3$,  the last  term in the right side of \eqref{tr-3} is nonnegative. Hence we get 
     \[
\Delta_f\textrm{tr}(\text{Ric}^3)\geq 4\textrm{tr}(\textrm{Ric}^3)-1
 -\frac32|\nabla \text{Ric}|^2,
\]
which is (ii).

\end{proof}
Now we prove an inequality, which plays a key role in the proof of Theorem \ref{thm-rig}.

\begin{lemma}	\label{lemma2}
Let  $(M^4, g, f)$ be a $4$-dimensional complete normalized gradient shrinking Ricci soliton satisfying \eqref{soliton-3} and \eqref{f-norm} with the scalar curvature $S=1$. Then
\begin{align}\label{tr-ine}
\Delta_f\big[ f\big(\textrm{tr}(\text{Ric}^3)-\frac14\big)\big]&\geq 9 f\big(\textrm{tr}(\text{Ric}^3)-\frac14\big).
\end{align}

\end{lemma}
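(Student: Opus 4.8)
The plan is to apply the Leibniz rule for the drifted Laplacian to the product $f\cdot v$, where $v:=\textrm{tr}(\text{Ric}^3)-\frac14$, and to reduce the whole statement to a single pointwise curvature inequality. Writing $\Delta_f(fv)=f\Delta_f v+v\Delta_f f+2\langle\nabla f,\nabla v\rangle$, I would first record $\Delta_f f=\Delta f-|\nabla f|^2=(\frac{n}{2}-S)-f=1-f$ for $n=4$, $S=1$, and insert Proposition \ref{tr}(i) for $\Delta_f v=\Delta_f\textrm{tr}(\text{Ric}^3)$. The only genuinely new ingredient is then the gradient term $\langle\nabla f,\nabla\textrm{tr}(\text{Ric}^3)\rangle$.

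To compute it, note $\nabla_s\textrm{tr}(\text{Ric}^3)=3R_{ab}R_{bc}\nabla_sR_{ca}$, so $\langle\nabla f,\nabla\textrm{tr}(\text{Ric}^3)\rangle=3R_{ab}R_{bc}\,\nabla_{\nabla f}R_{ca}$, and the key is the value of $\nabla_{\nabla f}\text{Ric}$. Since $S$ is constant we have $\nabla_i\nabla_jS=0$, and since $\nabla_kf_{ij}=-\nabla_kR_{ij}$ by differentiating the soliton equation, formula \eqref{eq3} collapses to
\[
\nabla_{\nabla f}R_{ij}=(\text{Ric}^2)_{ij}-\tfrac12R_{ij}+R_{ikjl}f_kf_l .
\]
Contracting with $R_{ab}R_{bc}$ produces $\textrm{tr}(\text{Ric}^4)-\frac12\textrm{tr}(\text{Ric}^3)$ plus the radial term $(\text{Ric}^2)_{ij}R_{ikjl}f_kf_l$, which, at a point where $\text{Ric}=\textrm{diag}(a_1,a_2,a_3,0)$ with $\nabla f\parallel e_4$, equals $f\sum_{a=1}^3 a_a^2K_{a4}$. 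I would then eliminate $\textrm{tr}(\text{Ric}^4)$ by the algebraic (Newton) identity for the three nonzero eigenvalues subject to $\sum a_a=1$, $\sum a_a^2=\frac12$, which gives $\textrm{tr}(\text{Ric}^4)=\frac43\textrm{tr}(\text{Ric}^3)-\frac{5}{24}$, together with $a_1a_2a_3=\frac13\textrm{tr}(\text{Ric}^3)-\frac1{12}$ from \eqref{product}.

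Substituting everything, the powers of $\textrm{tr}(\text{Ric}^3)$ combine cleanly: the standalone part reduces to $6\big(\textrm{tr}(\text{Ric}^3)-\frac14\big)=6v$ and the $f$-linear part to $-6fv$, so that
\[
\Delta_f(fv)-9fv=6v+f\Big(6Q+6\textstyle\sum_{a=1}^3a_a^2K_{a4}-6v-\tfrac32|\nabla\text{Ric}|^2\Big),
\]
where $Q=(\nabla\text{Ric})\ast(\nabla\text{Ric})\ast\text{Ric}\ge0$ is the nonnegative weighted term of Proposition \ref{tr}. Since $f\ge0$ and $v=3a_1a_2a_3\ge0$ by \eqref{product} and the nonnegativity of $\text{Ric}$ established in Proposition \ref{tr}, it suffices to prove that the bracketed residual is nonnegative.

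This last step is the \emph{main obstacle}. The difficulty is that $\sum_{a}a_a^2K_{a4}$ carries no sign by itself (only $\sum_aK_{a4}=R_{44}=0$ is automatic), and $6Q-\frac32|\nabla\text{Ric}|^2$ is likewise indefinite because the eigenvalue weights $a_i$ may be small. To control it I would exploit the isoparametric structure: the level sets $\Sigma(t)$ are parallel constant-mean-curvature hypersurfaces whose shape operator is the diagonal tensor $\frac1{\sqrt f}\big(\frac12-a_a\big)$. The radial curvatures $K_{a4}$ are then governed by the Riccati equation along $\nabla f$, while the normal and off-diagonal components of $\nabla\text{Ric}$ are tied to the second fundamental form through the Codazzi equation; crucially, because $\Sigma(t)$ is three-dimensional its full curvature tensor is determined by its Ricci tensor, so the Gauss equation lets me re-express the relevant parts of $|\nabla\text{Ric}|^2$ and of $\sum_a a_a^2K_{a4}$ in terms of the same eigenvalues $a_i$ and of $Q$. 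Assembling these relations with $\text{Ric}\ge0$ should reduce the bracket to a manifestly nonnegative expression; carrying out this reduction is where the real work lies.
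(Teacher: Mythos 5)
Your setup and computations are correct as far as they go: the Leibniz expansion with $\Delta_f f=1-f$, the contracted form of \eqref{eq3} giving $\nabla_{\nabla f}R_{ij}=(\text{Ric}^2)_{ij}-\frac12R_{ij}+R_{ikjl}f_kf_l$, the resulting identity $\langle\nabla f,\nabla\textrm{tr}(\text{Ric}^3)\rangle=\frac52v+3f\sum_{\alpha=1}^3a_\alpha^2K_{\alpha4}$ for $v=\textrm{tr}(\text{Ric}^3)-\frac14$, and your displayed decomposition of $\Delta_f(fv)-9fv$ all check out against the paper's formulas. The genuine gap is the last step: the claim that it ``suffices'' to prove the bracketed residual nonnegative, for which you offer only a plan. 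That sufficient condition is strictly stronger than the lemma and is not what the geometric identities deliver. Indeed, if you carry out exactly the reduction you propose --- use the constancy of $|\text{Ric}|^2$, i.e.\ \eqref{ric-square-1}, and evaluate $R_{ij}R_{ikjl}R_{kl}$ via the Gauss equation on the three-dimensional level sets (this is the content of \eqref{eq-product2}) --- you obtain the \emph{identity}
\[
6Q+6\sum_{\alpha=1}^3a_\alpha^2K_{\alpha4}-6v-\frac32|\nabla\text{Ric}|^2=6Q-\frac{6v}{f},
\]
so your bracket is nonnegative precisely where $fQ\geq v$. Since $v\geq0$, this is not manifestly true, and no Riccati or Codazzi sign argument can produce it: after those relations are used the bracket has already been reduced to $6Q-6v/f$, and controlling $v$ by $fQ$ pointwise is essentially as hard as the lemma itself. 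The correct bookkeeping is the opposite split: the standalone $6v$ outside the bracket exactly cancels $f\cdot(-\frac{6v}{f})=-6v$, leaving the exact identity $\Delta_f(fv)-9fv=6fQ\geq0$, which proves the lemma (and is in fact slightly sharper than the paper's statement, since the paper discards $Q$ earlier via Proposition \ref{tr}(ii)).

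For comparison, the paper never needs a sign on $K_{\alpha4}$ or on your bracket: it eliminates $|\nabla\text{Ric}|^2$ and the radial curvatures $R_{\alpha4\alpha4}$ exactly, using \eqref{ric-square-1}, the Gauss equations, \eqref{eq3}, and $\nabla_4|\text{Ric}|^2=0$, arriving at $\Delta_f\textrm{tr}(\text{Ric}^3)\geq(10-\frac1f)v-\frac2f\langle\nabla f,\nabla\textrm{tr}(\text{Ric}^3)\rangle$; the cross term then cancels against $+2\langle\nabla f,\nabla v\rangle$ in the Leibniz rule, which is the whole point of the weight $f$. One further small point: your eigenframe computations with $e_4=\nabla f/|\nabla f|$ are only valid where $\nabla f\neq0$, so, as in the paper, you must extend the final inequality across the focal variety $M_-=f^{-1}(0)$; this works because $M_-$ is a smooth submanifold of codimension two.
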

\begin{proof}	
Let  $\Sigma(t)= f^{-1}(\frac{t^2}4),  t>0,$ be the level sets of $f$. It is known that each $\Sigma(t)$   is a smooth hypersurface. From now on, since  $t$ is fixed,  we omit $t$ in the notation $\Sigma(t)$ for simplicity. Choose a  local orthonormal frame $\{ e_1,e_2, e_3\}$ for $\Sigma$ so that $\{ e_1,e_2, e_3, e_4\}$, where $e_4=\frac{\nabla f}{|\nabla f|}$, is a   local orthonormal frame for $M$. Recall that the  intrinsic curvature tensor  $R^\Sigma_{\alpha\beta\gamma\eta}$ and the extrinsic curvature tensor $R_{\alpha\beta\gamma\eta}$ of $\Sigma$, where $\alpha, \beta, \gamma, \eta\in\{1,2,3\}$,  are related  by  the Gauss equations: 
\begin{align}\label{Gauss}
R^\Sigma_{\alpha\beta\gamma\eta}=R_{\alpha\beta\gamma\eta}+h_{\alpha\gamma}h_{\beta\eta}-h_{\alpha\eta}h_{\beta\gamma}
\end{align}
where $h_{\alpha\beta}$ denote the components of  the second fundamental form $A$ of $\Sigma$.  Moreover,
\begin{align}\label{GaussRic}
R^\Sigma_{\alpha\gamma}=R_{\alpha\gamma}-R_{\alpha4\gamma4}+Hh_{\alpha\gamma}-h_{\alpha\beta}h_{\beta\gamma},
\end{align}
and the scalar curvature $S^\Sigma$  of $\Sigma$ satisfies
\begin{align}\label{GaussS}S^\Sigma=S-2R_{44}+H^2-|A|^2.
\end{align}
In what follows,  the indices  $\alpha, \beta, \gamma, \eta\in\{1,2,3\}$ and $i, j, k\in \{1,2,3, 4\}$.\\

Since  $\textrm{Ric}(\nabla f, \cdot)=\frac12\nabla S(\cdot)=0$, $R_{4i}=0$, $i=1, \ldots, 4$. Then
\begin{align}\label{scalar}
S^\Sigma&=S+H^2-|A|^2.
\end{align}
Noting $\Sigma=f^{-1}(t)$ and using \eqref{soliton-3} and \eqref{f-norm}, we have
\begin{align}\label{secondform}
h_{\alpha\beta}=\frac{f_{\alpha\beta}}{|\nabla f|}=\frac{\frac12\delta_{\alpha\beta}-R_{\alpha\beta}}{\sqrt{ f}}.
\end{align}
Then the  mean curvature of $\Sigma$ satisfies
\begin{align}\label{meancurvature}
H=\frac{\frac32-\sum_{\alpha=1}^3R_{\alpha\alpha}}{\sqrt{ f}}=\frac{\frac32-S}{\sqrt{ f}}=\frac{1}{2\sqrt{ f}}.
\end{align}
Fixing $p\in \Sigma$, we can choose the local orthonormal frame $\{ e_1,e_2, e_3\}$  so that $R_{\alpha\beta}=a_{\alpha}\delta_{\alpha\beta}$ at $p$. Denote $R_{44}$  by $a_4$ and observe that $a_4=0$. From \eqref{secondform}, we get
\begin{align}\label{second}
h_{\alpha\beta}=\frac{\frac12-a_{\alpha}}{\sqrt{ f}}\delta_{\alpha\beta}.
\end{align}
Hence \eqref{scalar}, \eqref{meancurvature} and \eqref{second}  imply that
\begin{align}\label{scal}
S^\Sigma&
=1+\frac{1}{ 4f}-\frac1{ f}\sum_{\alpha=1}^3(\frac12-a_{\alpha})^2\nonumber\\
&=1+\frac{1}{4f}-\frac1{ f}(\frac34-\sum_{a=1}^3a_{\alpha}+\sum_{\alpha=1}^3a_{\alpha}^2)\nonumber\\
&=1+\frac{1}{4f}-\frac1{ f}(\frac34- S+|\text{Ric}|^2)\nonumber\\
&=1+\frac{1}{4f}-\frac{3}{4f}+\frac{1}{f}-\frac{1}{2f}=1. 
\end{align}

\noindent In the above,  we have used $a_4=0$, $S=1$ and $|\text{Ric}|^2=\dfrac12$.\\

On the other hand, since the dimension of $\Sigma $ is three, it is known that  the curvature tensor can be expressed in terms of Ricci curvature as follows:
\begin{align*}
R^\Sigma_{\alpha\beta\gamma\eta}=(R^\Sigma_{\alpha\gamma}g_{\beta\eta}-R^\Sigma_{\alpha\eta}g_{\beta\gamma}+R^\Sigma_{\beta\eta}g_{\alpha\gamma}-R^\Sigma_{\beta\gamma}g_{\alpha\eta})-\frac{S^\Sigma}{2}(g_{\alpha\gamma}g_{\beta\eta}-g_{\alpha\eta}g_{\beta\gamma}).
\end{align*}
Hence,  under the local orthonormal frame $\{ e_1,e_2, e_3\}$ at $p$ chosen in the above, for fixed $\alpha, \beta$, $\alpha\neq \beta$, the above equations together with \eqref{GaussRic} imply that the components $R^\Sigma_{\alpha\beta\alpha\beta}$ of curvature tensor of $\Sigma$ are as follows:
\begin{align}\label{coef}
R^\Sigma_{\alpha\beta\alpha\beta}&=R^\Sigma_{\alpha\alpha}+R^\Sigma_{\beta\beta}-\frac{S^\Sigma}{2}\nonumber\\
&=R_{\alpha\alpha}-R_{\alpha4\alpha4}+Hh_{\alpha\alpha}-h_{\alpha\alpha}^2+R_{\beta\beta}-R_{\beta4\beta4}+Hh_{\beta\beta}-h_{\beta\beta}^2-\frac12\nonumber\\
&=a_\alpha+a_{\beta}-R_{\alpha4\alpha4}-R_{\beta4\beta4}+H(h_{\alpha\alpha}+h_{\beta\beta})-h_{\alpha\alpha}^2-h_{\beta\beta}^2-\frac12.
\end{align}
In the above, we have used \eqref{scal} and $h_{\alpha\beta}= 0$, $\alpha\neq \beta$ (see \eqref{second}). Therefore, for fixed $\alpha\neq \beta$ again, 
 using  the Gauss equations \eqref{Gauss} together with \eqref{coef}, \eqref{meancurvature} and \eqref{second}, we get
\begin{align}
R_{\alpha\beta\alpha\beta}&=R^\Sigma_{\alpha\beta\alpha\beta}-h_{\alpha\alpha}h_{\beta\beta}+h_{\alpha\beta}^2\nonumber\\
&=a_\alpha+a_\beta-R_{\alpha4\alpha4}-R_{\beta4\beta4}+H(h_{\alpha\alpha}+h_{\beta\beta})\nonumber\\
&\quad-h_{\alpha\alpha}^2-h_{\beta\beta}^2-\frac12-h_{\alpha\alpha}h_{\beta\beta}\nonumber\\
&=a_\alpha+a_\beta-R_{\alpha4\alpha4}-R_{\beta4\beta4}-\frac12+\frac{1-a_\alpha-a_\beta}{2f} \nonumber\nonumber\\
&\quad -\frac{(\frac12-a_\alpha)^2+(\frac12-a_\beta)^2+(\frac12-a_\alpha)(\frac12-a_\beta)}{ f} \nonumber\\
&=a_\alpha+a_\beta-R_{\alpha4\alpha4}-R_{\beta4\beta4}-\frac12+\frac{-\frac14-a_\alpha^2-a_\beta^2-a_\alpha a_{\beta}+a_\alpha+a_\beta}{ f}\nonumber\\
&=(1+\dfrac1f)(a_\alpha+a_\beta)-(R_{\alpha4\alpha4}+R_{\beta4\beta4})-(1+\dfrac{1}{2 f})\frac12-\dfrac{a_\alpha^2+a_\beta^2}{ f}-\dfrac{a_\alpha a_\beta}{f}.\label{r}
\end{align}
Then we have 
\begin{align}\label{eq-prod}
\qquad \qquad\sum_{\alpha\ne \beta}^3R_{\alpha\beta\alpha\beta}a_\alpha a_\beta
&=(1+\frac1f)\sum_{\alpha\ne \beta}^3(a_\alpha+a_\beta)a_\alpha a_\beta-\frac12(1+\frac1{2f})\sum_{\alpha\ne \beta}^3a_\alpha a_\beta\nonumber\\
&\quad-2\sum_{\alpha\ne \beta}^3R_{\alpha4\alpha4}a_\alpha a_\beta-\frac2{ f}\sum_{\alpha\ne \beta}^3a_\alpha^3a_\beta-\frac1{ f}\sum_{\alpha\ne \beta}^3a_\alpha^2a_\beta^2.
\end{align}
Now we compute the sums in  the right of  \eqref{eq-prod} respectively. We will use the equalities $\sum_{\alpha=1}^3a_{\alpha}=1$ and $\sum_{\alpha=1}^3a_{\alpha}^2=\frac12$.

\begin{align}\label{term0}
\sum_{\alpha\ne \beta}^3a_\alpha a_\beta
=\sum_{\alpha=1}^3\sum_{\beta\ne \alpha}a_\alpha a_\beta
=\sum_{\alpha=1}^3a_\alpha(1-a_\alpha)
=1-\frac12=\frac12.
\end{align}

\begin{align}\label{term1}
\sum_{\alpha\ne\beta}^3(a_\alpha+a_\beta)a_\alpha a_\beta&=2\sum_{\alpha=1}^3\sum_{\beta\ne \alpha}a_\alpha^2a_\beta\nonumber\\
&=2\sum_{\alpha=1}^3a_\alpha^2(1-a_\alpha)
=1-2\sum_{\alpha=1}^3a_\alpha^3.
\end{align}

\begin{align}\label{term3}
\sum_{\alpha\ne \beta}^3a_\alpha^3a_\beta
&=\sum_{\alpha=1}^3\sum_{\beta\ne \alpha}a_\alpha^3a_\beta
=\sum_{\alpha=1}^3a_\alpha^3(1-a_\alpha)=\sum_{\alpha=1}^3a_\alpha^3-\sum_{\alpha=1}^3a_\alpha^4.
\end{align}

\begin{align}\label{term4}
\sum_{\alpha\ne \beta}^3a_\alpha^2a_\beta^2
=\sum_{\alpha=1}^3\sum_{\beta\ne \alpha}a_\alpha^2a_\beta^2
=\sum_{\alpha=1}^3a_\alpha^2(\frac12-a_\alpha^2)
=\frac14-\sum_{\alpha=1}^3a_\alpha^4.
\end{align}

\noindent We calculate $\sum_{\alpha\ne \beta}^3R_{\alpha4\alpha4}a_\alpha a_\beta$ as follows. Since $S$ is constant,
(\ref{eq3}) becomes
\[
-\sum_{k,l=1}^4R_{ikjl}f_kf_l=\sum_{k=1}^4(\nabla_kf_{ij})f_k+\sum_{k=1}^4R_{kj}(R_{ik}-\frac12 g_{ik}).
\]
Note  $f_4=\nabla_{\frac{\nabla f}{|\nabla f|}}f=|\nabla f|$ and $ f_{\alpha}=0$.  The above equation yields that, for  $\alpha$ fixed,
\begin{align}\label{eq-ric-I4}
R_{\alpha4\alpha4}|\nabla f|^2&=-(\nabla_4f_{\alpha\alpha})|\nabla f|+\sum_{k=1}^4R_{\alpha k}(\frac12\delta_{k\alpha}-R_{k\alpha})\nonumber\\
&=(\nabla_4R_{\alpha\alpha})|\nabla f|+\frac12 a_\alpha-a_\alpha^2.
\end{align}
In the last equality of \eqref{eq-ric-I4}, we have used $R_{4\alpha}=0$ and $R_{\alpha\beta}=a_{\alpha}\delta_{\alpha\beta}$.
Then
\begin{align}\label{eq-ric-sum}
\sum_{\alpha\ne \beta}^3R_{\alpha4\alpha4}a_\alpha a_\beta
&=\sum_{\alpha=1}^3\sum_{\beta\ne \alpha}R_{\alpha4\alpha4}a_\alpha a_\beta\nonumber\\
&=\sum_{\alpha=1}^3R_{\alpha4\alpha4}a_\alpha(1-a_\alpha)\nonumber\\
&=\frac1{|\nabla f|^2}\sum_{\alpha=1}^3\big[(\nabla_4R_{\alpha\alpha})|\nabla f|+\frac12 a_\alpha-a_\alpha^2\big]a_\alpha(1-a_\alpha)\nonumber\\
&=\frac1{|\nabla f|}\sum_{\alpha=1}^3\big[(\nabla_4R_{\alpha\alpha})(a_\alpha-a_\alpha^2)+\frac1{|\nabla f|^2}\sum_{\alpha=1}^3(\frac12a_\alpha^2 -\frac32 a_\alpha^3+a_\alpha^4)
\end{align}
Besides, by $R_{\alpha\beta}=a_\alpha\delta_{\alpha\beta}$  and $a_4=0$,  it holds that,
\[\nabla_4\textrm{tr}(\textrm{Ric}^3)=3\sum_{\alpha=1}^3(\nabla_4 R_{\alpha\alpha})a_{\alpha}^2,
\]
and
\[
0=\nabla_4|\text{Ric}|^2=2\sum_{\alpha=1}^3(\nabla_4R_{\alpha\alpha})a_{\alpha}.
\]
Then substituting these two equalities above into \eqref{eq-ric-sum} yields
\begin{align}\label{eq-ric-sum2}
\sum_{\alpha\ne \beta}^3R_{\alpha4\alpha4}a_\alpha a_\beta&=-\frac1{3|\nabla f|}(\nabla_4\textrm{tr}(\textrm{Ric}^3))+\frac1{|\nabla f|^2}\sum_{\alpha=1}^3(\frac12a_\alpha^2 -\frac32 a_\alpha^3+a_\alpha^4)\nonumber\\
&=-\frac1{3f}(\nabla f) (\textrm{tr}(\textrm{Ric}^3))+\frac{1}{4f}-\frac{3}{2f}\sum_{\alpha=1}^3a_\alpha^3+\frac1{ f}\sum_{\alpha=1}^3a_\alpha^4.
\end{align}
In the second equality of \eqref{eq-ric-sum2}, we have used $\displaystyle e_4=\dfrac{\nabla f}{|\nabla f|}$ and $|\nabla f|^2=f$. 
Substituting \eqref{term0}, \eqref{term1}, \eqref{term3}, \eqref{term4} and \eqref{eq-ric-sum2} into \eqref{eq-prod} gives
\begin{align}\label{eq-product2}
\sum_{\alpha\ne \beta}^3R_{\alpha\beta\alpha\beta}a_\alpha a_\beta&=(1+\frac1f)(1-2\sum_{\alpha=1}^3a_\alpha^3)-\frac14(1+\frac1{2f})\nonumber\\
&\qquad-\frac2{ f}(\sum_{\alpha=1}^3a_\alpha^3-\sum_{\alpha=1}^3a_\alpha^4)-\frac1{ f}(\frac14-\sum_{\alpha=1}^3a_\alpha^4)\nonumber\\
&\qquad\quad
-2\bigg[-\frac1{3f}(\nabla f) (\textrm{tr}(\textrm{Ric}^3))+\frac{1}{4f}-\frac{3}{2f}\sum_{\alpha=1}^3a_\alpha^3+\frac1{ f}\sum_{\alpha=1}^3a_\alpha^4\bigg]\nonumber\\
 &=\frac34+\frac2{3f}(\nabla f) (\textrm{tr}(\textrm{Ric}^3))+\frac1{8f}-(2+\frac1f)\sum_{\alpha=1}^3a_\alpha^3+\frac1{ f}\sum_{\alpha=1}^3a_\alpha^4.
\end{align}
Noting that  $R_{4i}=0, i=1, \ldots,  4,$ and  $R_{\alpha\beta}=a_{\alpha}\delta_{\alpha\beta}$ at $p$, we have 
\[
\sum_{i,j,k,l=1}^4R_{ij}R_{ikjl}R_{kl}=\sum_{\alpha\ne \beta}^3R_{\alpha\beta\alpha\beta}a_\alpha a_\beta.
\]
By \eqref{ric-square-1},  it holds that
\begin{align}\label{nabla-ric}
\frac12+|\nabla \text{Ric}|^2-2\sum_{\alpha\ne \beta}^3R_{\alpha\beta\alpha\beta}a_\alpha a_\beta=0,
\end{align}
Using   \eqref{eq-product2} and \eqref{nabla-ric},    the inequality in Proposition \ref{tr}-(ii) implies  that
\begin{align}\label{deltaRic}
\Delta_f\textrm{tr}(&\text{Ric}^3)\nonumber\\
&\geq 4\textrm{tr}(\textrm{Ric}^3)-1\nonumber\\
&\quad-\frac32\big[1+\frac4{3f}(\nabla f) (\textrm{tr}(\textrm{Ric}^3))+\frac1{4f}-2(2+\frac1f)\sum_{\alpha=1}^3a_\alpha^3+\frac2{ f}\sum_{\alpha=1}^3a_\alpha^4\big]\nonumber\\
&= -\frac52-\frac{3}{8f} -\frac2{f}(\nabla f) (\textrm{tr}(\textrm{Ric}^3)) +(10+\frac3f)\sum_{\alpha=1}^3a_\alpha^3-\frac{3}f\sum_{\alpha=1}^3a_\alpha^4.
\end{align}
We will use an algebraic equality in the following (see its proof in the appendix of this paper): If $\sum_{\alpha=1}^3a_\alpha=1$ and $\sum_{\alpha=1}^3a_\alpha^2=\frac12$, then
\begin{align}\label{cor-s4-1}
\sum_{\alpha=1}^3a_\alpha^4=-\frac{5}{24}+\frac43 \sum_{\alpha=1}^3a_\alpha^3.
\end{align}
By \eqref{cor-s4-1}, \eqref{deltaRic} becomes
\begin{align}\label{eq-laplace}
\qquad\qquad&\Delta_f\textrm{tr}(\text{Ric}^3)\nonumber\\
&\geq -\frac52+\frac{1}{4f} -\frac2{f}(\nabla f) (\textrm{tr}(\textrm{Ric}^3)) +(10-\frac1f)\sum_{\alpha=1}^3a_\alpha^3\nonumber\\
&=(10-\frac1{f})\big(\textrm{tr}(\textrm{Ric}^3)-\frac14\big)-\frac2{f}\langle \nabla f,\nabla \textrm{tr}(\textrm{Ric}^3)\rangle.
\end{align}
Therefore,
\begin{align}\label{ineq-lem}
\qquad\quad&\Delta_f\big[f\big(\textrm{tr}(\textrm{Ric}^3)-\frac14\big)\big]\nonumber\\
&=f\Delta_f(\textrm{tr}(\textrm{Ric}^3))+2\langle \nabla f,\nabla \textrm{tr}(\textrm{Ric}^3)\rangle+\big(\textrm{tr}(\textrm{Ric}^3)-\frac14\big)\Delta_ff\nonumber \\
&\geq (10 f-1)\big(\textrm{tr}(\textrm{Ric}^3)-\frac14\big)-2\langle \nabla f,\nabla \textrm{tr}(\textrm{Ric}^3)\rangle\nonumber\\
&\qquad+2\langle \nabla f,\nabla \textrm{tr}(\textrm{Ric}^3)\rangle+\big(\textrm{tr}(\textrm{Ric}^3)-\frac14\big)(1-f)\nonumber\\
&=9 f\big(\textrm{tr}(\textrm{Ric}^3)-\frac14\big).
\end{align}
In  the last equality above, we have used $\Delta_ff=\dfrac{n}{2}- f-S=1- f$.   We have proved that \eqref{ineq-lem} holds for $p\in M$ with $f(p)\neq 0$. Further, since the focal variety $M_{-}=f^{-1}(0)$ is a smooth submanifold with codimension $2$,  \eqref{ineq-lem} holds on $M$.  We complete the proof.

\end{proof}

Now let us prove our main result, that is 

\begin{theorem}(Theorem \ref{thm-rig}) \label{thm-rig-1} 
 Let $(M, g, f)$
be a $4$-dimensional  complete noncompact gradient shrinking Ricci soliton satisfying \eqref{soliton}. If $M$  has the constant scalar curvature $S=2\lambda$, then 
it must be isometric to a finite quotient of $\mathbb{S}^2\times \mathbb{R}^2$. 
\end{theorem}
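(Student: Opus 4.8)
The plan is to prove that the nonnegative function $u:=f\big(\mathrm{tr}(\mathrm{Ric}^3)-\tfrac14\big)$ vanishes identically, and then to read off the product structure from the resulting pointwise control of $\mathrm{Ric}$. First I would record that $u\ge 0$. Diagonalizing $\mathrm{Ric}$ at a point with eigenvalues $a_1,a_2,a_3$ and $a_4=0$ (the last in the direction $\nabla f$, by \eqref{ric4}), Proposition \ref{tr} gives $a_\alpha\ge 0$, while \eqref{product} reads $3a_1a_2a_3=\mathrm{tr}(\mathrm{Ric}^3)-\tfrac14$. Hence $\mathrm{tr}(\mathrm{Ric}^3)-\tfrac14=3a_1a_2a_3\ge 0$, so $u\ge 0$, with equality exactly when one of $a_1,a_2,a_3$ vanishes; in that case the constraints $\sum a_\alpha=1$ and $\sum a_\alpha^2=\tfrac12$ force the eigenvalues of $\mathrm{Ric}$ to be $\{\tfrac12,\tfrac12,0,0\}$. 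Thus the whole theorem reduces to showing $u\equiv 0$.

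Second, I would feed the key inequality \eqref{tr-ine}, $\Delta_f u\ge 9u$, into an integration against the weighted measure $e^{-f}dv$. Since $\Delta_f u\,e^{-f}=\mathrm{div}(e^{-f}\nabla u)$, integrating over $D(t)=\{f\le t^2/4\}$ and applying the divergence theorem gives
\[
\int_{D(t)}\Delta_f u\, e^{-f}\,dv=\int_{\Sigma(t)}e^{-f}\,\langle\nabla u,\tfrac{\nabla f}{|\nabla f|}\rangle\, dA .
\]
To control the boundary term I would use that $\mathrm{tr}(\mathrm{Ric}^3)-\tfrac14$ is bounded (the $a_\alpha$ are bounded because $|\mathrm{Ric}|^2=\tfrac12$ is constant) and that $|\nabla\mathrm{Ric}|$ is bounded, which follows from $|\nabla\mathrm{Ric}|^2=2R_{ij}R_{ikjl}R_{kl}-\tfrac12$ (see \eqref{ric-square-1}) together with the curvature estimate of Munteanu--Wang \cite{MW15}; in fact any polynomial bound in $\sqrt f$ would suffice here. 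On $\Sigma(t)$ one has $f=t^2/4$ and $|\nabla f|=t/2$, whence $|\nabla u|\le C(t+t^2)$, and by Theorem \ref{thm-cscs}(v) (the volume estimate of \cite{MR2732975}, with $k=n-2S=2$) the area of $\Sigma(t)$ grows only linearly, $|\Sigma(t)|=O(t)$. Hence the boundary integral is bounded by $C\,t^3e^{-t^2/4}\to 0$. Integrating \eqref{tr-ine} over $D(t)$ and letting $t\to\infty$, the left side tends to $0$ while, by monotone convergence ($u\ge 0$), the right side tends to $9\int_M u\,e^{-f}\ge 0$. Therefore $\int_M u\,e^{-f}=0$, so $u\equiv 0$ and, by the first paragraph, $\mathrm{Ric}$ has the constant eigenvalues $\{\tfrac12,\tfrac12,0,0\}$ on all of $M$.

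Third, I would upgrade this to $\nabla\mathrm{Ric}\equiv 0$. Fix $p\notin M_-$ and an adapted orthonormal frame with $a_1=a_2=\tfrac12$, $a_3=a_4=0$ and $e_4=\nabla f/|\nabla f|$, writing $K_{ij}=R_{ijij}$ for the sectional curvatures. Since $|\mathrm{Ric}|^2$ is constant, $0=\nabla_4|\mathrm{Ric}|^2=\nabla_4R_{11}+\nabla_4R_{22}$; feeding $\alpha=1,2$ into \eqref{eq-ric-I4} (whose inhomogeneous term $\tfrac12a_\alpha-a_\alpha^2$ vanishes for $a_\alpha=\tfrac12$) and adding gives $K_{14}+K_{24}=0$, hence $K_{34}=0$ from $R_{44}=0$, and then $K_{13}+K_{23}=0$ from $R_{33}=0$. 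Adding the identities $R_{11}=R_{22}=\tfrac12$ now forces $2K_{12}=1$, i.e. $K_{12}=\tfrac12$. With these eigenvalues the identity \eqref{nabla-ric} reduces to $|\nabla\mathrm{Ric}|^2=K_{12}-\tfrac12$, so $\nabla\mathrm{Ric}\equiv 0$ away from $M_-$, hence everywhere. A complete gradient shrinking soliton with parallel Ricci is rigid: the two eigendistributions are parallel, so by de Rham the universal cover splits as $N^2\times\mathbb{R}^2$ with $N$ Einstein of Ricci curvature $\tfrac12$ (a round $\mathbb{S}^2$) and the flat factor carrying the Gaussian potential $f=\tfrac14 d^2(\cdot,M_-)$ of Theorem \ref{thm-cscs}(ii); since shrinkers have finite fundamental group, $M$ is a finite quotient of $\mathbb{S}^2\times\mathbb{R}^2$, as claimed.

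I expect the main obstacle to be the rigorous vanishing of the boundary term and the convergence of the weighted integrals in the second paragraph: this is precisely where the external curvature estimates of \cite{MW15} and the area/volume estimates of \cite{MR2732975} must be combined to overcome the Gaussian weight $e^{-f}$. Once $u\equiv 0$ is secured, the passage to parallel Ricci and the de Rham splitting are comparatively routine.
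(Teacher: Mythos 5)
Your proposal is correct, and its skeleton is the paper's: the engine is the same drifted-Laplacian inequality \eqref{tr-ine} for $h=f\big(\mathrm{tr}(\mathrm{Ric}^3)-\frac14\big)$, the same pointwise nonnegativity $\mathrm{tr}(\mathrm{Ric}^3)-\frac14=3a_1a_2a_3\ge 0$ from Proposition \ref{tr}, and the same external inputs (the estimate of \cite{MW15} giving $|\nabla\mathrm{Ric}|$ bounded via \eqref{ric-square-1}, and the volume information of Theorem \ref{thm-cscs}(v) or \cite{MR2732975}). You differ, however, in two genuine ways. First, to force $h\equiv 0$ the paper multiplies $-\Delta_f h+9h\le 0$ by $\phi^2 h e^{-f}$ with a cutoff and lets the radius go to infinity, which requires only the finiteness of $\int_M h^2e^{-f}$ and $\int_M|\nabla h|^2e^{-f}$, hence only polynomial volume growth; you instead integrate $\mathrm{div}(e^{-f}\nabla h)\ge 9he^{-f}$ over the sublevel sets $D(t)$ and kill the flux through $\Sigma(t)$ using $|\nabla h|\le C(t+t^2)$, $e^{-f}=e^{-t^2/4}$ on $\Sigma(t)$, and the linear area growth $|\Sigma(t)|=V'(t)=O(t)$ supplied by Theorem \ref{thm-cscs}(v) with $k=2$. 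This is rigorous as stated: since $9\int_{D(t_0)}he^{-f}\le Ct^3e^{-t^2/4}$ for all $t\ge t_0$ and the left side is monotone, $h$ vanishes identically; your route is arguably shorter but leans on the sharper level-set area information, where the paper's cutoff argument needs only the cruder volume bound. Second, at the endgame the paper, having pinned the Ricci eigenvalues to $\{\frac12,\frac12,0,0\}$, simply cites \cite[Theorem 2]{FR} to get rigidity from the rank-two Ricci tensor, whereas you derive $\nabla\mathrm{Ric}\equiv 0$ directly, and your computation checks out: with $a_1=a_2=\frac12$, $a_3=a_4=0$, equation \eqref{eq-ric-I4} together with $\nabla_4|\mathrm{Ric}|^2=0$ gives $K_{14}+K_{24}=0$; then $R_{44}=0$ and $R_{33}=0$ give $K_{34}=0$ and $K_{13}+K_{23}=0$, so $R_{11}+R_{22}=1$ forces $K_{12}=\frac12$, and \eqref{nabla-ric}, which with these eigenvalues reads $|\nabla\mathrm{Ric}|^2=K_{12}-\frac12$, yields $\nabla\mathrm{Ric}=0$ off $M_-$ and hence everywhere by continuity; the de Rham splitting of the universal cover into a round $\mathbb{S}^2$ (of Ricci curvature $\frac12$) times $\mathbb{R}^2$ and the finiteness of $\pi_1$ then finish. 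Your version buys self-containedness, replacing the appeal to \cite{FR} by a page of computation built entirely from identities the paper already establishes, while the paper's citation is quicker; both are complete proofs of the theorem.
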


\begin{proof} Since $M$ has finite fundamental group, we only need to prove the theorem assuming that $M$ is simply connected. For convenience, choose $\lambda=\frac12$ and  let $M$ satisfies \eqref{soliton-3} and \eqref{f-norm} with the scalar curvature $S=1$. We   use the same  notation as in the proof of Proposition \ref{tr}.\\

We have known that  $a_1+a_2+a_3=1$ and   $a_1^2+a_2^2+a_3^2=\dfrac12$ imply $a_i\geq 0$, $i=1, 2, 3$. Hence \eqref{product} implies that 
\begin{align}\label{trRic}\textrm{tr}(\text{Ric}^3)-\frac14=3a_1a_2a_3\geq 0,
\end{align}
and the equality holds if and only if $a_1a_2a_3=0$, namely, at least one of $a_1, a_2$ and $ a_3$ is zero. Assume $a_3=0$.  Then from $S=1$ and $|\textrm{Ric}|^2=\frac12$, we have $a_1=a_2=\frac12$. \\

Since $\textrm{tr}(\text{Ric}^3)$ is bounded and $f$ is of quadratic growth, 

\[\int _{D(2r)\backslash D(r)} \big(\textrm{tr}(\text{Ric}^3)-\frac14\big)^2f^2e^{-f}\leq Cr^4e^{-\frac{r^2}4}\textrm{vol}(D(2r))\leq Cr^{6}e^{-\frac{r^2}4},\]


\noindent where $D_r=\{x\in M; \dist (x, M_{-})<r\}$. Here we have used (v) of Theorem \ref{thm-cscs}. Alternatively, we may  use the volume estimate by Cao and the second author of the present paper in \cite{MR2732975}, which says that 
$\textrm{vol}(D(r))\leq cr^n$ for any $n$-dimensional complete gradient shrinking Ricci soliton. Hence,

\[\int _{M} (\textrm{tr}\big(\text{Ric}^3)-\frac14\big)^2f^2e^{-f}<\infty.
\]

\noindent We  also need confirm the finiteness of  $ \int_M\big|\nabla
[f(\textrm{tr}(Ric^3)-\frac14)]\big|^2e^{-f}.$ In fact, it was proved by Munteanu and Wang (\cite{MW15}, Theorem 1.2)  that for a four-dimensional complete shrinking gradient Ricci soliton $M$ with bounded
scalar curvature $S$,  there exists a constant $c>0$ such that
\begin{align}|Rm|\leq cS, \quad \text{on} \quad M.
\end{align}
This implies that the curvature operator $Rm$ is bounded if $S$ is constant.  By \eqref{ric-square-1}, we get that $|\nabla \text{Ric}|$ is bounded. Therefore, by a direct computation,  the boundedness of $|\text{Ric}|$ and $|\nabla \text{Ric}|$ implies  that $|\nabla \textrm{tr}(\text{Ric}^3)|$ is also bounded. Then
\begin{align*}
\int_M&\big|\nabla
\big[f\big(\textrm{tr}(\text{Ric}^3)-\frac14\big)\big]\big|^2e^{-f}\\
&\leq 2\int_M\big(\textrm{tr}(\text{Ric}^3)-\frac14\big)^2\big|\nabla f\big|^2e^{-f}+2\int_M \big|\nabla
\big(\textrm{tr}(\text{Ric}^3)-\frac14\big)\big|^2f^2e^{-f}\\
&<\infty.
\end{align*}

\bigskip
\noindent Denote $h=f\big(\textrm{tr}(Ric^3)-\frac14\big)$. From Lemma \ref{lemma2}, we have

\begin{align}\label{ineq-soma}
-\Delta_fh+9 h\leq 0.
\end{align} 

\noindent  Let $\phi$  be the nonnegative $C^{\infty}$ function on $M$ with compact support satisfying  that  $\phi$ is $1$ on $B_R$,  $|\nabla\phi|\leq C$ on $B_{R+1}\setminus B_R$, and $\phi=0$ on $M\setminus B_{R+1}$, where $B_R$ denotes the geodesic ball of $M$ of radius $R$ centered at  a fixed $p\in M$.  Multiplying both sides of \eqref{ineq-soma} by $\phi^2 he^{-f}$ and integrating on $M$, we have

\[\begin{split}
0&\geq-\int_M\phi^2 h
(\Delta_fh)e^{-f}+9\int_M\phi^2h^2e^{-f}\\
&=\int_M \phi^2|\nabla h|^2 e^{-f} +\int_M2\langle h\nabla\phi,\phi\nabla h\rangle e^{-f}+9\int_M\phi^2h^2e^{-f}\\
&\geq \int_M \phi^2|\nabla h|^2 e^{-f} -\frac12 \int_M\phi^2|\nabla h|^2 e^{-f}-2\int_Mh^2|\nabla \phi|^2 e^{-f} +9\int_M\phi^2h^2e^{-f}\\
&\geq \frac12 \int_{B_R} |\nabla h|^2 e^{-f}-2C\int_{B_{R+1}\backslash B_R}h^2 e^{-f} +9\int_{B_R}h^2e^{-f}
\end{split}\]
Noting the finiteness of $\int_Mh^2 e^{-f}$ and letting $R\rightarrow \infty$, we get 
\begin{align}\label{vanish} 
0\geq \frac12 \int_{M} |\nabla h|^2 e^{-f}+9\int_Mh^2e^{-f}\geq 0.
\end{align}
From \eqref{vanish},  $h=f\big(\textrm{tr}(Ric^3)-\frac14\big)\equiv 0$ on $M$. Noting that the focal variety $M_{-}=f^{-1}(0)$ is a $2$-dimensional smooth submanifold, we  can get that $\textrm{tr}(\text{Ric}^3)-\frac14\equiv 0$ on $M$. By \eqref{trRic}, one of $a_{i}, i=1, 2, 3,$ must be zero. Then this together with the equalities $\sum_{i=1}^3a_{i}=1$ and $\sum_{i=1}^3a_{i}^2=\frac12$ implies that   two of $a_1, a_2$ and $ a_3$ must be $\frac12$. Therefore the eigenvalues of Ricci curvature can be arranged so that $a_1= a_2=\frac12, a_3=a_4=0.$ This implies that the rank of Ricci tensor is $2$.  Hence $M$ must be rigid (\cite[Theorem 2]{FR}). Since $S=1$ and the dimension of the soliton $M$ is four,  it must be the quotient of $\mathbb{S}^2\times\mathbb{R}^2$.
The proof is completed.

\end{proof}
\begin{remark} We would like to point out that the  finiteness of $ \int_{M} |\nabla h|^2 e^{-f}$ can also be obtained directly from computations for (\ref{vanish}). Our argument  here will be useful for expanding gradient Ricci solitons.
\end{remark}
 
\section{Appendix}
For the sake of completeness, we include the proof of  the following algebraic identities used in this  article. 
\begin{proposition}\label{lem4.1}
Let 
\[\sigma_i=a^i+b^i+c^i,
\]
where $i=1,2,3,4$.
Then 
\[
6abc=\sigma_1^3-3\sigma_1\sigma_2+2\sigma_3,
\]
and
\[\sigma_4=\frac{\sigma_1^4}6-\sigma_1^2\sigma_2+\frac{\sigma_2^2}2+\frac{4\sigma_1\sigma_3}3.
\]
\end{proposition}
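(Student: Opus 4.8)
The plan is to recognize both displayed formulas as instances of Newton's identities for the three quantities $a, b, c$, and to derive them from the elementary symmetric polynomials $e_1 = a+b+c$, $e_2 = ab+bc+ca$, and $e_3 = abc$. The decisive structural feature is that we work with only three variables, so the fourth elementary symmetric polynomial $e_4$ vanishes identically; this is exactly what allows the $\sigma_4$ recursion to close up in terms of $\sigma_1,\sigma_2,\sigma_3$ alone.

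For the first identity, I would simply cube $\sigma_1$. Expanding $(a+b+c)^3$ produces $\sigma_3$, plus three times the symmetric sum $T := a^2b + a^2c + b^2a + b^2c + c^2a + c^2b$, plus $6abc$. On the other hand, $\sigma_1\sigma_2 = (a+b+c)(a^2+b^2+c^2) = \sigma_3 + T$, so $T = \sigma_1\sigma_2 - \sigma_3$. Substituting this back gives $\sigma_1^3 = \sigma_3 + 3(\sigma_1\sigma_2 - \sigma_3) + 6abc = 3\sigma_1\sigma_2 - 2\sigma_3 + 6abc$, which rearranges to the claimed $6abc = \sigma_1^3 - 3\sigma_1\sigma_2 + 2\sigma_3$.

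For the second identity, I would invoke the Newton recursion $\sigma_4 = e_1\sigma_3 - e_2\sigma_2 + e_3\sigma_1 - 4e_4$ and drop the final term since $e_4 = 0$. It then remains to express $e_2$ and $e_3$ through the power sums: from $\sigma_2 = \sigma_1^2 - 2e_2$ one gets $e_2 = (\sigma_1^2 - \sigma_2)/2$, while the first identity already furnishes $e_3 = abc = (\sigma_1^3 - 3\sigma_1\sigma_2 + 2\sigma_3)/6$. Plugging these together with $e_1 = \sigma_1$ into $\sigma_4 = \sigma_1\sigma_3 - e_2\sigma_2 + e_3\sigma_1$ and collecting the monomials in $\sigma_1,\sigma_2,\sigma_3$ yields $\tfrac{\sigma_1^4}{6} - \sigma_1^2\sigma_2 + \tfrac{\sigma_2^2}{2} + \tfrac{4\sigma_1\sigma_3}{3}$.

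No step presents any genuine conceptual difficulty; the only place warranting care is the final collection of fractional coefficients in the second identity. In particular, the $\sigma_1\sigma_3$ contribution arises in two pieces, the term $\sigma_1\sigma_3$ from the leading part of the recursion and the term $\tfrac{1}{3}\sigma_1\sigma_3$ coming out of $e_3\sigma_1$, and these must combine to the stated coefficient $\tfrac{4}{3}$. I would guard against slips by tracking each monomial type ($\sigma_1^4$, $\sigma_1^2\sigma_2$, $\sigma_2^2$, $\sigma_1\sigma_3$) separately rather than expanding the whole expression at once.
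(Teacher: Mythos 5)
Your proof is correct, and both identities check out: the coefficient bookkeeping in the second identity ($1+\tfrac13=\tfrac43$ for $\sigma_1\sigma_3$, $-\tfrac12-\tfrac12=-1$ for $\sigma_1^2\sigma_2$) is right. For the first identity your argument is essentially the paper's: the paper simply states that $6abc=\sigma_1^3-3\sigma_1\sigma_2+2\sigma_3$ follows from direct computation, and your expansion via the mixed sum $T=\sigma_1\sigma_2-\sigma_3$ is just that computation made explicit. For the second identity, however, your route genuinely differs. You invoke the classical Newton recursion $\sigma_4=e_1\sigma_3-e_2\sigma_2+e_3\sigma_1-4e_4$ and observe that with only three variables $e_4$ vanishes, which is a clean structural explanation of why $\sigma_4$ closes up in terms of $\sigma_1,\sigma_2,\sigma_3$. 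The paper instead proceeds self-containedly: it writes $\sigma_4=\sigma_2^2-2\left(a^2b^2+a^2c^2+b^2c^2\right)$, evaluates the cross terms via $(ab+bc+ca)^2=a^2b^2+a^2c^2+b^2c^2+2abc\,\sigma_1$ together with $2(ab+bc+ca)=\sigma_1^2-\sigma_2$, and then substitutes $abc$ from the first identity, arriving at $\sigma_4=\sigma_2^2-\tfrac{(\sigma_1^2-\sigma_2)^2}{2}+4abc\,\sigma_1$ before expanding. The two derivations are equally elementary; yours buys conceptual transparency and immediate generalizability (the same recursion handles more variables or higher power sums), while the paper's avoids citing any external identity, which suits its role as a throwaway appendix computation. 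One stylistic caution: if you were writing this for a self-contained appendix, you should either prove the $k=4$ Newton identity you quote or give a reference, since as stated it is the one ingredient your argument imports rather than derives.
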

\begin{proof}The first equation  is
\[
6abc=(a+b+c)^3-3(a+b+c)(a^2+b^2+c^2)+2(a^3+b^3+c^3),
\]
which follows from a direct computation.\\

Now we prove the second one. Since 
\[
2(ab+bc+ac)=\sigma_1^2-\sigma_2
\]
\[
\text{and} \quad (ab+bc+ac)^2=a^2b^2+a^2c^2+b^2c^2+2abc(a+b+c),
\]

we have 
\[\begin{split}
\sigma_4&=(a^2+b^2+c^2)^2-2(a^2b^2+a^2c^2+b^2c^2)\\
&=\sigma_2^2-2\big(\frac{(\sigma_1^2-\sigma_2)^2}4-2abc\sigma_1\big)\\
&=\sigma_2^2-\frac{(\sigma_1^2-\sigma_2)^2}2+4abc\sigma_1\\
&=\sigma_2^2-\frac{\sigma_1^4-2\sigma_1^2\sigma_2+\sigma_2^2}2+\frac23(\sigma_1^3-3\sigma_1\sigma_2+2\sigma_3 )\sigma_1\\
&=\frac{\sigma_1^4}6-\sigma_1^2\sigma_2+\frac{\sigma_2^2}2+\frac{4\sigma_1\sigma_3}3.
\end{split}
\]

\end{proof}
\begin{cor}\label{cor-s4} If $a_1+a_2+a_3=S$  and $a_1^2+a_2^2+a_3^2=\frac{S^2}2$, then
\[
\sum_{\alpha=1}^3a_\alpha^4=-\frac{5S^4}{24}+\frac{4S}3 \sum_{\alpha=1}^3a_\alpha^3.
\]
\end{cor}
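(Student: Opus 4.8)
The plan is to read the claim off directly from the second identity in Proposition \ref{lem4.1}, of which Corollary \ref{cor-s4} is simply a specialization. Setting $a=a_1$, $b=a_2$, $c=a_3$, the two hypotheses say precisely that $\sigma_1 = a_1+a_2+a_3 = S$ and $\sigma_2 = a_1^2+a_2^2+a_3^2 = \frac{S^2}{2}$, while $\sigma_3 = \sum_{\alpha=1}^3 a_\alpha^3$ and $\sigma_4 = \sum_{\alpha=1}^3 a_\alpha^4$ remain unevaluated. First I would substitute these values of $\sigma_1$ and $\sigma_2$ into the formula
\[
\sigma_4 = \frac{\sigma_1^4}{6} - \sigma_1^2\sigma_2 + \frac{\sigma_2^2}{2} + \frac{4\sigma_1\sigma_3}{3}.
\]

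Next I would collect the three terms that become pure powers of $S$. Under the substitution, $\frac{\sigma_1^4}{6}$, $-\sigma_1^2\sigma_2$, and $\frac{\sigma_2^2}{2}$ turn into $\frac{S^4}{6}$, $-\frac{S^4}{2}$, and $\frac{S^4}{8}$ respectively; placing these over the common denominator $24$ gives $\frac{4-12+3}{24}\,S^4 = -\frac{5S^4}{24}$. The remaining term $\frac{4\sigma_1\sigma_3}{3}$ becomes $\frac{4S}{3}\sum_{\alpha=1}^3 a_\alpha^3$. Assembling the two pieces yields exactly
\[
\sum_{\alpha=1}^3 a_\alpha^4 = -\frac{5S^4}{24} + \frac{4S}{3}\sum_{\alpha=1}^3 a_\alpha^3,
\]
which is the asserted identity.

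Since the statement is a direct evaluation of an identity already established in Proposition \ref{lem4.1}, there is no genuine obstacle here. The only point demanding care is the bookkeeping of the three rational coefficients of $S^4$, which must combine to $-\frac{5}{24}$; everything else is a mechanical substitution. I note in passing that the normalization $\sigma_2 = \frac{S^2}{2}$ is exactly the constraint $|\mathrm{Ric}|^2 = \frac{S^2}{2}$ coming from constant scalar curvature, so this corollary is the algebraic input that produced \eqref{cor-s4-1} in the proof of Lemma \ref{lemma2} upon setting $S=1$.
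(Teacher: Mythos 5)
Your proposal is correct and matches the paper's intent exactly: the corollary is stated without proof precisely because it is the immediate specialization $\sigma_1=S$, $\sigma_2=\frac{S^2}{2}$ of the identity $\sigma_4=\frac{\sigma_1^4}{6}-\sigma_1^2\sigma_2+\frac{\sigma_2^2}{2}+\frac{4\sigma_1\sigma_3}{3}$ proved in Proposition \ref{lem4.1}, and your coefficient bookkeeping $\frac{1}{6}-\frac{1}{2}+\frac{1}{8}=-\frac{5}{24}$ is accurate. Nothing further is needed.
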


\begin{bibdiv}
\begin{biblist}
\bib{Bol}{article}{
   author={Bolton, J.},
   title={Transnormal systems},
   journal={Quart. J. Math. Oxford Ser. (2)},
   volume={24},
   date={1973},
   pages={385--395},
   issn={0033-5606},
   review={\MR{336645}},
   doi={10.1093/qmath/24.1.385},
}
\bib{Cao1}{article}{
   author={Cao, Huai-Dong},
   title={Existence of gradient K\"{a}hler-Ricci solitons},
   conference={
      title={Elliptic and parabolic methods in geometry},
      address={Minneapolis, MN},
      date={1994},
   },
   book={
      publisher={A K Peters, Wellesley, MA},
   },
   date={1996},
   pages={1--16},
   review={\MR{1417944}},
}

\bib{caoALM11}{article}{
   author={Cao, Huai-Dong},
   title={Recent progress on Ricci solitons},
   journal={Adv. Lect. Math.},
   volume={11},
   date={2010},
   number={2},
   pages={1--38},
   }
   
   \bib{CaoChen}{article}{
   author={Cao, Huai-Dong},
   author={Chen, Qiang},
   title={On Bach-flat gradient shrinking Ricci solitons},
   journal={Duke Math. J.},
   volume={162},
   date={2013},
   number={6},
   pages={1149--1169},
   issn={0012-7094},
   review={\MR{3053567}},
}

   \bib{CCZ}{article}{
   author={Cao, Huai-Dong},
   author={Chen, Bing-Long},
   author={Zhu, Xi-Ping},
   title={Recent developments on Hamilton's Ricci flow},
   conference={
      title={Surveys in differential geometry. Vol. XII. Geometric flows},
   },
   book={
      series={Surv. Differ. Geom.},
      volume={12},
      publisher={Int. Press, Somerville, MA},
   },
   date={2008},
   pages={47--112},
   review={\MR{2488948}},
}

\bib{CRZ}{article}{
   author={Cao, Huai-Dong},
   author={Ribeiro Jr, Ernani}
   author={Zhou, Detang},
   title={Four-dimensional complete gradient shrinking Ricci solitons},
   journal={Journal f\"ur die reine und angewandte Mathematik (Crelle's Journal),  arXiv:2006.13066 [math.DG]},
   volume={},
   date={2020},
   doi={10.1515/crelle-2021-0026}
   }

\bib{MR2732975}{article}{
   author={Cao, Huai-Dong},
   author={Zhou, Detang},
   title={On complete gradient shrinking Ricci solitons},
   journal={J. Differential Geom.},
   volume={85},
   date={2010},
   number={2},
   pages={175--185},
   issn={0022-040X},
   review={\MR{2732975}}
   }
   
    \bib{CWZ}{article}{
   author={Cao, Xiaodong},
   author={Wang, Biao},
   author={Zhang, Zhou},
   title={On locally conformally flat gradient shrinking Ricci solitons},
   journal={Commun. Contemp. Math.},
   volume={13},
   date={2011},
   number={2},
   pages={269--282},
   issn={0219-1997},
   review={\MR{2794486}},
}
   
   \bib{CH}{article}{
   author={Cao, Xiaodong},
   author={Tran, Hung},
   title={The Weyl tensor of gradient Ricci solitons},
   journal={Geom. Topol.},
   volume={20},
   date={2016},
   number={1},
   pages={389--436},
   issn={1465-3060},
   review={\MR{3470717}},
}
   
  \bib{Catino}{article}{
   author={Catino, Giovanni},
   title={Complete gradient shrinking Ricci solitons with pinched curvature},
   journal={Math. Ann.},
   volume={355},
   date={2013},
   number={2},
   pages={629--635},
   issn={0025-5831},
   review={\MR{3010141}},
}

\bib{catinoAdv}{article}{
   author={Catino, Giovanni},
   title={Integral pinched shrinking Ricci solitons},
   journal={Adv. Math.},
   volume={303},
   date={2016},
   pages={279--294},
   issn={0001-8708},
   review={\MR{3552526}},
}

 \bib{chen}{article}{
   author={Chen, Bing-Long},
   title={Strong uniqueness of the Ricci flow},
   journal={J. Differential Geom.},
   volume={82},
   date={2009},
   number={2},
   pages={363--382},
   issn={0022-040X},
   review={\MR{2520796}},
}

  \bib{CW}{article}{
   author={Chen, Xiuxiong},
   author={Wang, Yuanqi},
   title={On four-dimensional anti-self-dual gradient Ricci solitons},
   journal={J. Geom. Anal.},
   volume={25},
   date={2015},
   number={2},
   pages={1335--1343},
   issn={1050-6926},
   review={\MR{3319974}},
}

     \bib{CZ}{article}{
   author={Cheng, Xu},
   author={Zhou, Detang},
   title={Some drifted Laplacian inequalities for four-dimensional gradient expanding Ricci solitons and applications},
   journal={(in preparation)},
 
}

\bib{MR2274812}{article}{
   author={Chow, Bennett},
   author={Lu, Peng},
   author={Ni, Lei},
   title={Hamilton's Ricci flow, Graduate Studies in Mathematics},
   journal={American Mathematical Society, Providence, RI; Science Press Beijing, New York},
   volume={77},
   date={2006},
  
}

\bib{MR2448435}{article}{
   author={Eminenti, Manolo},
   author={La Nave, Gabriele},
   author={Mantegazza, Carlo},
   title={Ricci solitons: the equation point of view},
   journal={Manuscripta Math.},
   volume={127},
   date={2008},
   number={3},
   pages={345--367},
   issn={0025-2611},
   review={\MR{2448435}},
   doi={10.1007/s00229-008-0210-y},
}

\bib{Topping}{article}{
   author={Enders, Joerg},
   author={M\"{u}ller, Reto},
   author={Topping, Peter M.},
   title={On type-I singularities in Ricci flow},
   journal={Comm. Anal. Geom.},
   volume={19},
   date={2011},
   number={5},
   pages={905--922},
   issn={1019-8385},
   review={\MR{2886712}},
}

\bib{FLGR}{article}{
   author={Fern\'{a}ndez-L\'{o}pez, Manuel},
   author={Garc\'{\i}a-R\'{\i}o, Eduardo},
   title={Rigidity of shrinking Ricci solitons},
   journal={Math. Z.},
   volume={269},
   date={2011},
   number={1-2},
   pages={461--466},
   issn={0025-5874},
   review={\MR{2836079}},
}

\bib{FR}{article}{
   author={Fern\'{a}ndez-L\'{o}pez, Manuel},
   author={Garc\'{\i}a-R\'{\i}o, Eduardo},
   title={On gradient Ricci solitons with constant scalar curvature},
   journal={Proc. Amer. Math. Soc.},
   volume={144},
   date={2016},
   number={1},
   pages={369--378},
   issn={0002-9939},
   review={\MR{3415603}},
   doi={10.1090/proc/12693},
}	

\bib{GT}{article}{
   author={Ge, Jianquan},
   author={Tang, Zizhou},
   title={Geometry of isoparametric hypersurfaces in Riemannian manifolds},
   journal={Asian J. Math.},
   volume={18},
   date={2014},
   number={1},
   pages={117--125},
   issn={1093-6106},
   review={\MR{3215342}},
   doi={10.4310/AJM.2014.v18.n1.a6},
}

\bib{Ha}{article}{
   author={Hamilton, Richard S.},
   title={The Ricci flow on surfaces},
   conference={
      title={Mathematics and general relativity},
      address={Santa Cruz, CA},
      date={1986},
   },
   book={
      series={Contemp. Math.},
      volume={71},
      publisher={Amer. Math. Soc., Providence, RI},
   },
   date={1988},
   pages={237--262},
   review={\MR{954419}},
   doi={10.1090/conm/071/954419},
}

\bib{Ha1}{article}{
   author={Hamilton, Richard S.},
   title={The formation of singularities in the Ricci flow},
   conference={
      title={Surveys in differential geometry, Vol. II},
      address={Cambridge, MA},
      date={1993},
   },
   book={
      publisher={Int. Press, Cambridge, MA},
   },
   date={1995},
   pages={7--136},
   review={\MR{1375255}},
}

\bib{MR1249376}{article}{
   author={Ivey, Thomas},
   title={Ricci solitons on compact three-manifolds},
   journal={Differential Geom. Appl.},
   volume={3},
   date={1993},
   number={4},
   pages={301--307},
   issn={0926-2245},
   review={\MR{1249376}},
   doi={10.1016/0926-2245(93)90008-O},
}

\bib{MR3625140}{article}{
   author={Kim, Jongsu},
   title={On a classification of 4-d gradient Ricci solitons with harmonic
   Weyl curvature},
   journal={J. Geom. Anal.},
   volume={27},
   date={2017},
   number={2},
   pages={986--1012},
   issn={1050-6926},
   review={\MR{3625140}},
   doi={10.1007/s12220-016-9707-x},
}

\bib{Ko}{article}{
   author={Koiso, Norihito},
   title={On rotationally symmetric Hamilton's equation for K\"{a}hler-Einstein
   metrics},
   conference={
      title={Recent topics in differential and analytic geometry},
   },
   book={
      series={Adv. Stud. Pure Math.},
      volume={18},
      publisher={Academic Press, Boston, MA},
   },
   date={1990},
   pages={327--337},
   review={\MR{1145263}},
   doi={10.2969/aspm/01810327},
}

\bib{KW}{article}{
   author={Kotschwar, Brett},
   author={Wang, Lu},
   title={Rigidity of asymptotically conical shrinking gradient Ricci solitons},
   journal={J. Differential Geom.},
   volume={100},
   date={2015},
   number={1},
   pages={55--108},
   review={\MR{3326574}},
}

\bib{Mi}{article}{
   author={Miyaoka, Reiko},
   title={Transnormal functions on a Riemannian manifold},
   journal={Differential Geom. Appl.},
   volume={31},
   date={2013},
   number={1},
   pages={130--139},
   issn={0926-2245},
   review={\MR{3010083}},
   doi={10.1016/j.difgeo.2012.10.005},
}

\bib{Morgan}{article}{
   author={Morgan, Frank},
   title={Manifolds with density},
   journal={Notices Amer. Math. Soc.},
   volume={52},
   date={2005},
   number={8},
   pages={853--858},
   issn={0002-9920},
   review={\MR{2161354}},
}

\bib{MS}{article}{
   author={Munteanu, Ovidiu},
   author={Sesum, Natasa},
   title={On gradient Ricci solitons},
   journal={J. Geom. Anal.},
   volume={23},
   date={2013},
   number={2},
   pages={539--561},
   issn={1050-6926},
   review={\MR{3023848}},
   doi={10.1007/s12220-011-9252-6},
}

\bib{MW15}{article}{
   author={Munteanu, Ovidiu},
   author={Wang, Jiaping},
   title={Geometry of shrinking Ricci solitons},
   journal={Compositio Math.},
   volume={151},
   date={2015},
   pages={2273--2300},
}

\bib{MW2}{article}{
   author={Munteanu, Ovidiu},
   author={Wang, Jiaping},
   title={Positively curved shrinking Ricci solitons are compact},
   journal={J. Differential Geom.},
   volume={106},
   date={2017},
   number={3},
   pages={499--505},
   issn={0022-040X},
   review={\MR{3680555}},
}

\bib{Naber}{article}{
   author={Naber, Aaron},
   title={Noncompact shrinking four solitons with nonnegative curvature},
   journal={J. Reine Angew. Math.},
   volume={645},
   date={2010},
   pages={125--153},
   issn={0075-4102},
   review={\MR{2673425}},
}

\bib{Ni}{article}{
   author={Ni, Lei},
   author={Wallach, Nolan},
   title={On a classification of gradient shrinking solitons},
   journal={Math. Res. Lett.},
   volume={15},
   date={2008},
   number={5},
   pages={941--955},
   issn={1073-2780},
   review={\MR{2443993}},
}

\bib{P1}{article}{
author = {{Perelman}, G.},
    title = {The entropy formula for the Ricci flow and its geometric applications},
  journal = {ArXiv Mathematics e-prints},
   volume= {math/0211159},
     date = {2002},
}

\bib{MR2507581}{article}{
   author={Petersen, Peter},
   author={Wylie, William},
   title={Rigidity of gradient Ricci solitons},
   journal={Pacific J. Math.},
   volume={241},
   date={2009},
   number={2},
   pages={329--345},
   issn={0030-8730},
   review={\MR{2507581}},
   doi={10.2140/pjm.2009.241.329},
}

\bib{PW}{article}{
   author={Petersen, Peter},
   author={Wylie, William},
   title={On the classification of gradient Ricci solitons},
   journal={Geom. Topol.},
   volume={14},
   date={2010},
   number={4},
   pages={2277--2300},
   issn={1465-3060},
   review={\MR{2740647}},
   doi={10.2140/gt.2010.14.2277},
}

\bib{MR901710}{article}{
   author={Wang, Qi Ming},
   title={Isoparametric functions on Riemannian manifolds. I},
   journal={Math. Ann.},
   volume={277},
   date={1987},
   number={4},
   pages={639--646},
   issn={0025-5831},
   review={\MR{901710}},
   doi={10.1007/BF01457863},
}

\bib{Wu}{article}{
   author={Wu, Jia-Yong},
   author={Wu, Peng},
   author={Wylie, William},
   title={Gradient shrinking Ricci solitons of half harmonic Weyl curvature},
   journal={Calc. Var. Partial Differential Equations},
   volume={57},
   date={2018},
   number={5},
   pages={Paper No. 141, 15},
   issn={0944-2669},
   review={\MR{3849152}},
}

\bib{zhang}{article}{
   author={Zhang, Zhu-Hong},
   title={Gradient shrinking solitons with vanishing Weyl tensor},
   journal={Pacific J. Math.},
   volume={242},
   date={2009},
   number={1},
   pages={189--200},
   issn={0030-8730},
   review={\MR{2525510}},
}

\bib{Zhang2}{article}{
   author={Zhang, Zhuhong},
   title={A gap theorem of four-dimensional gradient shrinking solitons},
   journal={Comm. Anal. Geom.},
   volume={28},
   date={2020},
   number={3},
   pages={729--742},
   issn={1019-8385},
   review={\MR{4124142}},
   doi={10.4310/CAG.2020.v28.n3.a8},
}

\end{biblist}
\end{bibdiv}

\end{document}